\documentclass{article}
\usepackage{graphicx} % Required for inserting images
\usepackage{amsthm}
\usepackage{amsfonts, amsmath, amssymb, amsthm}
\usepackage{cleveref}
\usepackage[utf8]{inputenc}
\usepackage{url}

\title{The first-order part of the Ordered Variable Word theorem}
\date{\today}

\newtheorem*{statement}{Statement}
\newtheorem{theorem}{Theorem}
\numberwithin{theorem}{section}
\newtheorem{maintheorem}[theorem]{Main Theorem}
\newtheorem{lemma}[theorem]{Lemma}
\newtheorem{question}[theorem]{Question}
\newtheorem{proposition}[theorem]{Proposition}

\newtheorem{definition}[theorem]{Definition}
\newtheorem{corollary}[theorem]{Corollary}

\newtheorem{claim}{Claim}
\def\TT{\mathbb{T}}
\def\PP{\mathbb{P}}

\newcommand{\Psf}{\mathsf{P}}

\def\C{\mathcal{C}}
\def\I{\mathcal{I}}

\def\L{\mathcal{L}}

\def\P{\mathcal{P}}
\def\T{\mathcal{T}}

\def\M{\mathcal{M}}

\def\H{\mathcal{H}}

\newcommand{\uh}{\upharpoonright}
\newcommand{\uhr}{\upharpoonright}

\newcommand{\qvdash}{\operatorname{{?}{\vdash}}}
\newcommand{\nqvdash}{\operatorname{{?}{\nvdash}}}

\newcommand{\ISig}{\mathsf{I}\Sigma^0}

\newcommand{\IPi}{\mathsf{I}\Pi^0}
\newcommand{\IDelta}{\mathsf{I}\Delta^0}

\newcommand{\BSig}{\mathsf{B}\Sigma^0}
\newcommand{\BPi}{\mathsf{B}\Pi^0}

\newcommand{\LSig}{\mathsf{L}\Sigma^0}
\newcommand{\LPi}{\mathsf{L}\Pi^0}
\newcommand{\LDelta}{\mathsf{L}\Delta^0}
\newcommand{\PSig}{\mathsf{P}\Sigma^0}

\newcommand{\Exp}{\mathsf{exp}}

\newcommand{\PAm}{\mathsf{PA}^-}
\newcommand{\GammaCA}[1]{{#1}\mbox{-}\mathsf{CA}}

\newcommand{\RCA}[0]{\mathsf{RCA}}
\newcommand{\WKL}[0]{\mathsf{WKL}}

\newcommand{\RT}[0]{\mathsf{RT}}

\newcommand{\CSL}[0]{\mathsf{CSL}}
\newcommand{\OVW}[0]{\mathsf{OVW}}

\newcommand{\NN}[0]{\mathbb{N}}

\newcommand{\card}{\operatorname{card}} % Set cardinal
\newcommand{\OSub}{\mathsf{OSub}}

\newcommand{\WSub}{\mathsf{WSub}}

\newcommand{\Def}[1]{{#1}\mbox{-}\mathsf{Def}}

\newcommand{\h}[1]{\hat{#1}}

\def\qt#1{``#1''}%

\makeatletter
\newtheorem*{rep@theorem}{\rep@title}
\newcommand{\newreptheorem}[2]{%
\newenvironment{rep#1}[1]{%
 \def\rep@title{#2 \ref{##1}}%
 \begin{rep@theorem}}%
 {\end{rep@theorem}}}
\makeatother

\newreptheorem{theorem}{Theorem}
\newreptheorem{maintheorem}{Main Theorem}

\usepackage{xcolor}

\usepackage{authblk}
\DeclareSymbolFont{bbold}{U}{bbold}{m}{n}
\DeclareMathSymbol{\bbomega}{\mathord}{bbold}{"7F}

\author{Ludovic Patey}

\def\qt#1{``#1''}%

\begin{document}

\maketitle

\begin{abstract}
An ordered $\omega$-variable sequence over a finite alphabet~$A$ is an infinite sequence over the extended alphabet $A \sqcup \{ x_0, x_1, \dots \}$, where each variable kind appears at least once, and the last occurrence of $x_i$ appears before the first occurrence of $x_{i+1}$.
The Ordered Variable Word theorem states that for every finite coloring of the words over a finite alphabet~$A$, there is an ordered $\omega$-variable sequence~$W$ over~$A$ such that every word induced by substituting variables with letters, and cutting before the first undefined variable, has the same color.

In this article, we prove that the Ordered Variable Word theorem is $\Pi^1_1$-conservative over $\RCA_0 + \BSig_2$. It follows that the tree theorem for singletons by Chubb, Hirst, and McNicholl is $\Pi^1_1$-conservative over $\RCA_0 + \BSig_2$, answering a long-standing open question. \Cref{lem:pi01-extension} was found by Claude Opus 5.5 (Anthropic).
\bigskip

\textbf{MSC classes} Primary: 03B30, 03F30 Secondary: 05D10
\end{abstract}

\section{Introduction}

In 1984, Carlson and Simpson~\cite{carlson1984dual} proved a dual form of Ramsey's theorem, using as a combinatorial core a partition theorem about variable words, now known as the Ordered Variable Word theorem for words. A \emph{word} over a finite alphabet~$A$ is a finite sequence $a_0 \cdots a_{n-1}$ of elements of~$A$. We denote by $A^{<\omega}$ the set of words over~$A$. An \emph{ordered $\omega$-variable sequence} over~$A$ is an infinite sequence over the alphabet $A \sqcup \{ x_0, x_1, \dots \}$ such that each variable kind appears at least once, and such that for each~$i \in \NN$, the last occurrence of $x_i$ appears before the first occurrence of $x_{i+1}$. Given an ordered $\omega$-variable sequence $W$ and a word~$u = a_0 \cdots a_{n-1}$, $W[u]$ denotes the word obtained from $W$ by replacing every occurrence of $x_i$ with $a_i$, and cutting before the first occurrence of $x_n$. 

\begin{theorem}[Carlson and Simpson~\cite{carlson1984dual}]\label[theorem]{thm:ovw0}
For every $\ell \geq 1$ and every coloring $f : A^{<\omega} \to \ell$ over a finite alphabet~$A$, there is an ordered $\omega$-variable sequence~$W$ over~$A$ such that the set $\{ W[u] : u \in A^{<\omega} \}$ is $f$-monochromatic.
\end{theorem}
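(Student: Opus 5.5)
We prove \Cref{thm:ovw0} with the standard Carlson--Simpson strategy: build the variable sequence block by block, and use the Hales--Jewett theorem to control the finitely many colours induced at each stage. We use idempotent ultrafilters only as a guide and aim for a direct combinatorial construction. The basic finite tool is the Hales--Jewett theorem: for every finite alphabet $B$ and $k$, there is $N$ such that every $k$-colouring of $B^N$ admits a monochromatic combinatorial line. We need it in its ``extended'' form. For every finite set $S$ of words over $A$ (the words already fixed), every colouring $f$, and every $k$, there is a variable word $w(x)$ over $A\sqcup\{x\}$ with the following property: for each prefix-context $s\in S$, the colour of $s\,w(a)$ does not depend on $a\in A$. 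This follows by applying Hales--Jewett to the product colouring $u\mapsto (f(s u))_{s\in S}$ with $\ell^{|S|}$ colours.

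Since each colour at stage $n$ depends on where we cut, the construction must also handle truncations. We therefore first reduce to an auxiliary statement about \emph{extracted} colourings, via a Galvin--Glazer/Furstenberg style infinitary argument. The plan is the following.

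First, work in $\beta S$, where $S$ is the semigroup of words over $A\sqcup\{x\}$ with at least one occurrence of $x$ (together with constant words) under concatenation. Take a minimal idempotent $p$ in the ideal of variable words such that for each $a\in A$ the substitution homomorphism $\sigma_a$ (replacing $x$ by $a$) sends $p$ to the same idempotent $q$ on $A^{<\omega}$. This exists by the standard argument: choose $q$ minimal idempotent in $\beta(A^{<\omega})$, then find an idempotent $p\le q$ in the larger semigroup, using that the $\sigma_a$ are retractions. Let $C$ be the colour class of $f$ that belongs to $q$.

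Second, build $W=w_0w_1w_2\cdots$ inductively, each $w_i$ a variable word in $x_i$ alone. Maintain the invariant that for every $u\in A^{n}$, the word $W_n[u]:=w_0[u_0]\cdots w_{n-1}[u_{n-1}]$ satisfies $W_n[u]^{-1}C\in q$. Here $v^{-1}C=\{t : vt\in C\}$. At stage $n$ there are finitely many such sets. Using $\sigma_a(p)=q$ and idempotence ($C^\star$-style sets), we pick $w_n$ in the $p$-large set $\bigcap_{u}\bigcap_{a}\sigma_a^{-1}\big((W_n[u])^{-1}C^\star\big)$. We also require $W_n[u]\in C$, which the cut at the first occurrence of $x_n$ requires. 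Idempotence then gives the invariant at $n+1$.

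The main obstacle is the truncation clause: $W[u]$ for $|u|=n$ is cut before the first occurrence of $x_n$, so it may include constant letters of $w_n$ preceding that occurrence. I would therefore normalize each $w_n$ to begin with the variable $x_n$. This is permissible because the set of variable words starting with $x$ is a two-sided-compatible subsemigroup, and the minimal idempotent can be chosen inside its closure. With this normalization, $W[u]=W_n[u]$ exactly, and membership in $C$ follows from $W_n[u]^{-1}C\in q$ together with the star-set trick ($C^\star=\{t\in C: t^{-1}C^\star\in q\}$). Verifying that such an idempotent with all $\sigma_a(p)=q$ exists inside this subsemigroup is the delicate step. I would handle it via Ellis's lemma applied to the compact semigroup $\{p : \sigma_a(p)=q\ \forall a\}$, after checking that this semigroup is nonempty and closed.
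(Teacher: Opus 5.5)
\textbf{There is a genuine gap}, and it is exactly the step you call delicate: you cannot require every block $w_n$ to begin with $x_n$ while also having $\sigma_a(p)=q$ for all $a$.

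Let $V_x$ be the set of variable words beginning with $x$, and $B_a$ the set of constant words beginning with the letter $a$. Since $\sigma_a(w)\in B_a$ for every $w\in V_x$, the assumption $V_x\in p$ gives $B_a\in\sigma_a(p)$. If all $\sigma_a(p)$ equal $q$, then $q$ contains the pairwise disjoint sets $B_a$, $a\in A$. This is absurd as soon as $\card A\geq 2$.

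So $\{p\in\overline{V_x}:\sigma_a(p)=q \text{ for all } a\}$ is empty. Ellis's lemma applied to $\{p:\sigma_a(p)=q\}$ without the constraint $V_x\in p$ may simply return $q$ itself, which is a constant ultrafilter. Independently, $p\leq q$ alone already rules out $V_x\in p$, since $p=qp$ and $s^{-1}V_x=\emptyset$ for every nonempty constant $s$. Also, $V_x$ is only a right ideal, not a two-sided one.

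Without the normalization, your induction proves only the weaker, untruncated statement: all words $w_0[u_0]\cdots w_{n-1}[u_{n-1}]$ lie in $C$. The words of the theorem are $W[u]=W_n[u]\cdot e_n$, where $e_n$ is the constant prefix of $w_n$, and their colour is not controlled. A smaller point: if $w_0$ begins with $x_0$, then $W[\epsilon]$ is the empty word, whose colour $f(\epsilon)$ need not be that of $C$. So you also need a constant prefix $c$ before $w_0$.

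\textbf{How the paper handles the truncation.} Through Liu and Patey's forcing, the paper never asks blocks to begin with their variable during the construction. It builds $G$ such that, for infinitely many $n$, the whole truncated level $G[A^n]$ is monochromatic. The constant tail before $x_n$ is handled by the suffix $u$ in the Graham--Rothschild tree lemma and by the $f$-matching condition, which quantifies over the truncated words $\WSub^*(X_a)$. It then uses $\RT^1$ to fix one colour on an infinite set $H=\{h_0<h_1<\cdots\}$ of levels. Finally it passes to $\hat G=G[V]$, where $x_t$ occurs in $V$ only at position $h_t$, so every $\hat G[u]$ with $|u|=t$ lies in $G[A^{h_t}]$.

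\textbf{A repair within ultrafilters.} Stop demanding that the $\sigma_a(p)$ coincide. Take $q$ minimal in $\beta A^{+}$, a minimal left ideal $L\subseteq\beta S\,q$, and an idempotent $p\in\overline{V_x}\cap L$. This closed subsemigroup is nonempty because $\overline{V_x}$ is a right ideal of $\beta S$. Then the ultrafilters $q_a=\sigma_a(p)=\sigma_a(p)\,q$ are idempotents in the same minimal left ideal $\beta A^{+}q$, so $q_bq_a=q_b$ for all $a,b$. This suffices to propagate the invariant ``$W_n[u]\in C$ and $W_n[u]^{-1}C\in q_a$ for every $a$''. At each stage, choose $w_n\in V_x$ with $\sigma_b(w_n)\in\{t\in W_n[u]^{-1}C:(W_n[u]t)^{-1}C\in q_a\text{ for all }a\}$ for all $u\in A^n$ and $b\in A$. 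Start from a constant word $c\in C$ with $c^{-1}C\in q_a$ for all $a$; such a $c$ exists for any colour class $C\in q_{a_0}$.
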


We denote by $\OVW^0$, standing for Ordered Variable Word theorem, the statement of \Cref{thm:ovw0}. Carlson~\cite{carlson1988some} later proved a higher-order generalization of $\OVW^0$ to colorings of finite-dimensional ordered variable words. The Ordered Variable Word  theorem belongs to a family of tree partition theorems~\cite{chubb2009reverse,milliken1979ramsey} with applications in Structural Ramsey Theory~\cite{devlin1980some,hubivcka2025big,sauer2006coloring}.

From a meta-mathematical viewpoint, the Ordered Variable Word theorem was extensively studied in reverse mathematics and computability theory~\cite{miller2004effectiveness,Dzhafarov2017Effectiveness,liu2019computable,angles2023carlson,houerou2024pi}. Recently, Liu and Patey~\cite{liu2026reverse} made significant advances in the understanding of the computability-theoretic features of the Ordered Variable Word theorem, and proved the following weakly low basis theorem:

\begin{theorem}[Liu and Patey~\cite{liu2026reverse}]\label[theorem]{thm:ovw0-weakly-low}
Fix a set~$Z$ and a set~$P$ of PA degree over~$Z'$.
For every $\ell \geq 1$ and every $Z$-computable coloring $f : A^{<\omega} \to \ell$ over a finite alphabet~$A$, there is an ordered $\omega$-variable sequence~$W$ over~$A$ such that the set $\{ W[u] : u \in A^{<\omega} \}$ is $f$-monochromatic and $(W \oplus Z)' \leq_T P$.
\end{theorem}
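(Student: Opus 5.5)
We need a plan for the weakly low basis theorem for $\OVW^0$. Since this is cited from Liu and Patey, we sketch how such a proof would go.

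The natural strategy is a forcing construction à la Cholak–Jockusch–Slaman, carried out computably in $P$. We build $W$ by finite approximations while deciding the jump $(W \oplus Z)'$ along the way.

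\textbf{Conditions.} The forcing conditions are pairs $(\sigma, X)$ with the following properties:
\begin{itemize}
\item $\sigma$ is a finite ordered $n$-variable word.
\item $X$ is an infinite ordered $\omega$-variable sequence (the reservoir).
\item The concatenation $\sigma X$ is an ordered $\omega$-variable sequence, with the variables of $X$ shifted.
\item The coloring $f$ restricted to the words $(\sigma X)[u]$ behaves as an invariant we maintain. Specifically, there is a color $c$ such that every word $\sigma[u]$ with $|u|=n$ extends in the tree-like structure of $X$ to a monochromatic-in-$c$ family.
\end{itemize}

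To keep everything $P$-computable, the reservoirs should not be arbitrary. Instead they are coded as members of $\Pi^0_1(Z')$ classes, or as limits presented by $Z'$-indices, so that $P$, being PA over $Z'$, can select them. The first step is therefore to formulate a $\Pi^0_1(Z')$ notion of ``largeness'' for classes of reservoirs. A class is large if for every finite coloring of words there is a member and a color realizing an $\OVW^0$ solution. This mirrors the partition-regular / Mathias-with-largeness framework used for weakly low proofs of $\RT^1_2$. We then check that the class of valid extensions is partition regular in the Carlson–Simpson sense. That check uses the finite Hales–Jewett theorem and the Carlson–Simpson theorem itself as the combinatorial core (\Cref{thm:ovw0}).

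\textbf{Deciding the jump.} Given a condition and an index $e$, we want an extension forcing either $\Phi_e^{W\oplus Z}(e)\downarrow$ or $\Phi_e^{W\oplus Z}(e)\uparrow$. The question ``is there a finite extension $\tau$ of $\sigma$, compatible with the reservoir, such that for all (or for a large set of) substitutions the computation halts?'' is $\Sigma^0_1$ relative to $Z$ and the reservoir. If it succeeds, we take $\tau$. If it fails, the set of reservoirs $Y \subseteq X$ for which no such finite extension makes the computation converge is a $\Pi^0_1$ class. We need this class to still be large, and then $P$ selects a member.

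The asymmetry here is delicate. Convergence must hold on $W[u]$ for the specific sequence $W$, not on its instances. So we force over the sequence $W$ itself: convergence is witnessed by a finite prefix of $W$, and divergence is a $\Pi^0_1$ property of the remaining reservoir. What keeps the construction going is that the divergence class, when nonempty, is intersected with the color-homogeneity requirement. Since there are only finitely many colors, the pigeonhole splitting is handled by a $Z'$-decidable disjunction that $P$ can resolve.

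\textbf{Main obstacle.} The hardest step is proving that this intersection remains large. Concretely, if no finite extension forces convergence, then some reservoir exists that is simultaneously divergence-forcing and homogeneous. This requires a compactness argument combined with the Carlson–Simpson theorem applied to a product coloring: the color of the word, together with whether the partial computation has halted. I would try first to prove a $\Pi^0_1$-extension lemma. It would state that every nonempty $\Pi^0_1(Z)$ class of ordered variable sequences, together with a coloring, admits an ordered variable subsequence lying in the class and homogeneous for some color.

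With that lemma in hand, $P$ computes the sequence of conditions and the decisions for every $e$. This yields $(W\oplus Z)' \leq_T P$, and $W = \bigcup \sigma_s$ is $f$-monochromatic.
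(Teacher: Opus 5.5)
There is a genuine gap. The proposal never fixes a forcing question and an invariant that fit together, and the ingredients you do commit to would fail.

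\textbf{The positive case.} With a single stem $\sigma$ and a colour invariant, a positive answer to your question gives you one extension $\tau$ on which $\Phi_e$ halts. Nothing ensures that $\tau$ followed by words of the reservoir can still be made monochromatic. Thinning the reservoir so that one stem works for all future words is itself an instance of $\OVW^0$, with no control on the jump.

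The missing idea is Liu--Patey's, reproduced in \Cref{sec:forcing-notion} and \Cref{lem:sigma01-extension}, and it has two parts.
\begin{itemize}
\item Conditions $(I,a)$ carry a finite, length-homogeneous \emph{set} of stems. They satisfy only the weak $f$-matching invariant: for each $w\in\WSub^*(X_a)$, \emph{some} $v\in I$ makes $\WSub^=(v\cdot w)$ monochromatic.
\item The question $(v,a)\qvdash_{\ell\cdot\card I}\varphi$ asks for a level $n$ of the Graham--Rothschild tree such that \emph{every} node $u$ at level $n$ has an instance $\hat u\in\OSub^=(u)$ with $\varphi(v\cdot X_a[\hat u])$. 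On a positive answer you keep \emph{all} these instances as new stems. The $f$-matching invariant survives by \Cref{lem:graham-rothschild-tree} with $\ell\cdot\card I$ colours, so the combinatorial core is finite Hales--Jewett, not Carlson--Simpson.
\end{itemize}
Consequently the generic object is a finitely branching tree of stems whose dead branches are pruned (\Cref{lem:deletion}). A path through it is only monochromatic at infinitely many levels, so a final pigeonhole-and-collapse step $\hat G=G[V]$ is needed. Your conclusion that $W=\bigcup_s\sigma_s$ is $f$-monochromatic skips all of this.

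\textbf{The negative case and your proposed lemma.} The obstacle you single out, and the lemma you propose for it, are also misplaced. As stated the lemma is false: take the computable singleton class $\{x_0x_1x_2\cdots\}$ and a non-constant $f$. For classes closed under ordered subsequences it is just the Carlson--Simpson theorem. That yields a homogeneous reservoir with no bound on its jump, which is fatal: the next question is $\Sigma^0_1$ in the reservoir and must be decided by your construction. Getting such a reservoir with controlled jump is the very statement being proved.

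In fact no homogeneity is needed on the negative side. If the question fails, the Graham--Rothschild nodes all of whose instances avoid $\varphi$ form an infinite, finitely branching tree whose paths form a $\Pi^0_1(X_a\oplus Z)$ class. A path $V'$ gives a reservoir $X_a[V']$ forcing $\neg\varphi$, and $f$-matching is preserved for free since only the reservoir shrinks.

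\textbf{Where $P$ is used.} You use $P\gg Z'$ in the wrong place. If $P$ selects reservoirs from $\Pi^0_1(Z')$ classes, nothing makes their jumps $P$-computable, and the construction stalls. Instead, fix a Scott ideal coded by some $Q$ with $(Q\oplus Z)'\le_T Z'$, and draw every reservoir and every path $V'$ from it. Then all questions are $Z'$-decidable and the sequence of conditions is $Z'$-computable. $P$ is used only once, to pick a path $G$ through the $Z'$-computable, $Z'$-computably bounded tree of stems, which gives $(G\oplus Z)'\le_T P$. Over the standard integers formulas can be decided one at a time; the counting argument of \Cref{lem:pi01-extension} is only needed to make these decisions simultaneously under $\BSig_2$.
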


By an isomorphism theorem for weak K\"onig's lemma~\cite{fiori2026isomorphism}, there is a correspondence between $\Pi^1_1$-conservation theorems over $\RCA_0 + \BSig_2$ and proving formalized weakly low basis theorems over $\RCA_0 + \BSig_2$. In this article, we formalize \Cref{thm:ovw0-weakly-low} over $\RCA_0 + \BSig_2$, and deduce the following main theorem:

\begin{maintheorem}\label[maintheorem]{thm:main1}
$\RCA_0 + \OVW^0$ is $\Pi^1_1$-conservative over $\RCA_0 + \BSig_2$.
\end{maintheorem}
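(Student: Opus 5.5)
The proof goes through the isomorphism theorem for weak K\"onig's lemma of Fiori-Carones et al.~\cite{fiori2026isomorphism}. As I understand it, that theorem says the following. Suppose a $\Pi^1_2$ statement $\mathsf{P}$ admits, provably in $\RCA_0 + \BSig_2$, a formalized weakly low basis theorem: for every set $Z$, every instance $X \leq_T Z$ of $\mathsf{P}$, and every set $P$ of PA degree over $Z'$, there is a solution $Y$ to $X$ with $(Y \oplus Z)' \leq_T P$. Then $\RCA_0 + \mathsf{P}$ is $\Pi^1_1$-conservative over $\RCA_0 + \BSig_2$. In that setting, ``$P$ of PA degree over $Z'$'' and ``$(Y\oplus Z)' \leq_T P$'' are read as statements about $\Delta^0_2$-definable objects, since the jump need not exist as a set. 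The plan therefore has two stages. First, reduce \Cref{thm:main1} to a formalized version of \Cref{thm:ovw0-weakly-low} over $\RCA_0 + \BSig_2$. Second, prove that formalized version by carefully redoing the Liu--Patey construction in a nonstandard model of $\BSig_2$.

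For the first stage I would argue semantically. Take a countable model $\M \models \RCA_0 + \BSig_2$ and a $\Pi^1_1$ sentence false in $\M$. The isomorphism theorem produces an $\omega$-extension of $\M$ closed under the operation ``given $Z$, pick $P \gg Z'$ in the $\Delta^0_2$ sense''. This extension can be iterated through a countable sequence, adding solutions one instance at a time. Because every $\OVW^0$ instance with parameter $Z$ then has a solution $W$ with $(W\oplus Z)'$ computed by some $P$ already available, the union is a model of $\RCA_0 + \BSig_2 + \OVW^0$ with the same first-order part. In particular, $\BSig_2$ is preserved, because weak lowness keeps $\Sigma^0_2$ properties of $W \oplus Z$ decided $\Delta^0_2$-in-$P$. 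Since the first-order part is unchanged, the $\Pi^1_1$ sentence stays false, which gives conservation. This stage should be essentially bookkeeping once the cited isomorphism theorem is in hand.

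The second stage is the main obstacle. The Liu--Patey proof builds $W$ by a forcing that decides the jump, with conditions given by finite ordered variable words together with a $\Pi^0_1(Z)$ class of ``reservoirs''. To formalize it, I would proceed in three steps:
\begin{enumerate}
\item Express each forcing question about a $\Sigma^0_1(W\oplus Z)$ formula as a $\Sigma^0_1(Z)$ or $\Pi^0_1(Z)$ statement about the existence of extensions in a $\Pi^0_1$ class. The PA degree of $P$ over $Z'$ then lets $P$ answer these questions and choose members of the relevant $\Pi^0_1(Z')$ classes, in the style of the key \Cref{lem:pi01-extension}.
\item Replace every induction on the number of colors, or on the length of the alphabet, by a bounded recursion of standard length. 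Since $\ell$ and $|A|$ may be nonstandard, I would first check where the combinatorics (Hales--Jewett-type pigeonholes used to find monochromatic variable extensions) needs $\ISig_2$, and substitute $\BSig_2$ arguments there. Concretely, the finitary Hales--Jewett bounds are provable in $\RCA_0$ (even $\mathsf{I}\Sigma_1$ suffices for primitive recursive bounds), and the $\omega$-iteration is carried out by a $P$-computable construction.
\item Verify that $(W \oplus Z)'$ is $\Delta^0_1(P)$. This uses the fact that each jump question is decided at a finite stage of the construction, and totality of that stage function needs only $\BSig_2$ relative to $P$.
\end{enumerate}
The hardest part will be the pigeonhole step across the colors in the absence of $\ISig_2$. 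There I would first try the standard trick of working in a cut, namely running the $\Sigma^0_2$ pigeonhole only over standard-finite color sets determined by $P$'s choices.
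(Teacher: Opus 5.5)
\paragraph{Gap: deciding the jump without $\Sigma^0_2$ induction.}
Your Stage 1 matches the paper's outer argument: a one-step formalized weakly low basis theorem, iterated into an $\bbomega$-extension, then the usual completeness argument. Stage 2, however, has a genuine gap exactly where the paper's new work lies.

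In step 3 you say each jump question is decided at some finite stage, and that totality of the stage function needs only $\BSig_2$ relative to $P$. This fails. $\BSig_2$ relative to $P$ is not available: $\M[P]$, like $\M[Y']$, only satisfies $\RCA_0^*$. A $Y'$-computable recursion of conditions is therefore defined only on a $\Sigma_1(Y')$ cut $J$, which may be proper. Deciding one question per stage then settles $\Phi_e^{G\oplus Y}(e)$ only along that cut, and $(G\oplus Y)'\leq_T P$ fails.

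What is missing is Shore blocking. At stage $s$ you must decide \emph{all} $e<r(s)$, with $r(s)\geq\max\{s,t(I_s),a_s\}$, in a single $\Delta_1(Y')$ step. Then $\{r(s):s\in J\}$ is unbounded even when $J$ is proper, since otherwise $J$ would be a proper $\Delta_1(Y')$ cut, contradicting $\IDelta_1$. This requires deciding an $M$-finite, possibly nonstandard, family of $\Sigma^0_1$ formulas on every stem of a condition at once. Iterating the Liu--Patey single-formula forcing question $r$ times merely recreates the cut problem.

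The paper supplies this through two ideas your proposal does not provide:
\begin{itemize}
\item The counting trick: replace $(\varphi_{v,i})_{i<r}$ by the monotone family $\psi_{v,i}$ saying that at least $r-i$ of the $\varphi_{v,j}$ hold.
\item The $\Pi^0_1$-extension lemma, which you invoke only by name and only for single questions. It works over $d$-fold products of Graham--Rothschild trees. $E(d,\beta)$ is the $\Pi_1(Q\oplus Y)$ predicate that some restriction $X_a[\vec V]$ makes every stem $v$ with $\beta(v)>0$ force $\neg\psi_{v,\beta(v)-1}$; its table is $M$-coded thanks to $\BSig_2$. Take the least $d^*$ at which $g(d)=\max\{\sum_v\beta(v):E(d,\beta)\}$ stabilizes. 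This yields, in one step, a reservoir and a profile $\beta$ such that every stem forces $\neg\psi_{v,\beta(v)-1}$ and satisfies the forcing question for $\psi_{v,\beta(v)}$; a failure of the latter would add a coordinate and raise $g(d^*+1)$. The $\Sigma^0_1$-extension lemma then completes the decision.
\end{itemize}

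\paragraph{Misplaced difficulty and smaller corrections.}
The pigeonhole across colors is harmless. The construction only makes $G$ monochromatic at unboundedly many levels. Once $(G\oplus Y)'\leq_T P$ gives $\M[G]\models\RCA_0+\BSig_2$, $\RT^1$ (equivalent to $\BSig_2$) handles any $\ell\in M$. ``Standard-finite color sets'' and ``bounded recursion of standard length'' make no sense when $\ell$ and $\card A$ are nonstandard.

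Two smaller points also need fixing.
\begin{itemize}
\item Reservoirs must come from a Scott ideal $Q$ with $(Q\oplus Y)'\leq_T Y'$ (H\'ajek's formalized low basis theorem). This keeps all forcing questions $\Sigma_1(Q\oplus Y)$ and the whole construction $Y'$-computable. Choosing reservoirs inside $\Pi^0_1(Z')$ classes via $P$ would put their jumps out of reach.
\item $P$ must stay outside the model. It is obtained by Simpson--Smith as a path through a $\Delta_2(Y)$-definable tree, so that $P\gg Y'$ and $\RCA_0^*$ holds relative to $P$. Then $\BSig_2$ for $W\oplus Y$ reduces to $\BSig_1$ relative to $P$, not to anything ``$\Delta^0_2$ in $P$''. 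An extension closed under $Z\mapsto P\gg Z'$ would contain every $Z'$, hence satisfy $\mathsf{ACA}_0$, and could not keep a first-order part failing $\ISig_2$.
\end{itemize}
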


This improves a theorem by Le Houérou and Patey~\cite{houerou2024pi} who proved that $\RCA_0 + \OVW^0$ is $\forall \Pi^0_4$-conservative over $\RCA_0 + \BSig_2$, where a formula is $\forall \Pi^0_4$ iff it is of the form $\forall X \varphi(X)$ for a $\Pi^0_4$-formula~$\varphi$. Note that this conservation is sharp, as $\OVW^0$ restricted to unary alphabet is the infinite pigeonhole principle ($\RT^1$), which is equivalent to $\BSig_2$ over~$\RCA_0$ (see Hirst~\cite[Theorem 6.4]{hirst1987combinatorics}). 

\subsection{Tree theorem for singletons}

One can see the Ordered Variable Word theorem as a tree partition theorem as follows: Given a finite alphabet~$A$ and an ordered $\omega$-variable sequence~$W$ over~$A$, letting $T_W = \{ W[u] : u \in A^{<\omega} \}$, $(A^{<\omega}, \preceq)$ and $(T_W, \preceq)$ are tree-isomorphic, with witness map $u \mapsto W[u]$. The Ordered Variable Word theorem is therefore a generalization of the tree theorem of Chubb, Hirst, and McNicholl~\cite{chubb2009reverse}. The tree theorem for singletons ($\mathsf{TT}^1$) states that for every $\ell \geq 1$ and every coloring $f : 2^{<\omega} \to \ell$, there is an infinite $f$-monochromatic set $S \subseteq 2^{<\omega}$ such that $(S, \preceq)$ and $(2^{<\omega}, \preceq)$ are tree-isomorphic.

The first-order part of the tree theorem for singletons was a long-standing open question. Corduan, Groszek and Mileti~\cite{corduan2010reverse} proved that $\mathsf{TT}^1$ is not provable over $\RCA_0 + \BSig_2$. Chong, Wang and Yang~\cite{chong2023conservation} proved that $\RCA_0 + \mathsf{TT}^1$ is $\forall \Pi^0_3$-conservative over~$\RCA_0 + \BSig_2$ and Chong, Li, Wang and Yang~\cite{chong2020strength} proved that $\RCA_0 + \mathsf{TT}^1 + \PSig_1$ is $\Pi^1_1$-conservative over $\RCA_0 + \BSig_2 + \PSig_1$, where $\PSig_1$ is equivalent to the principle of the totality of Ackermann's function (see Kreuzer and Yokoyama~\cite{kreuzer2016principles}).

\begin{corollary}
$\RCA_0 + \mathsf{TT}^1$ is $\Pi^1_1$-conservative over~$\RCA_0 + \BSig_2$.
\end{corollary}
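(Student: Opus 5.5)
The plan is to derive the corollary from \Cref{thm:main1} by showing that $\RCA_0 + \OVW^0$ proves $\mathsf{TT}^1$. Then any $\Pi^1_1$ sentence provable in $\RCA_0 + \mathsf{TT}^1$ is provable in $\RCA_0 + \OVW^0$, hence in $\RCA_0 + \BSig_2$.

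Work in $\RCA_0 + \OVW^0$ and fix a coloring $f : 2^{<\omega} \to \ell$. Apply $\OVW^0$ with the alphabet $A = 2 = \{0,1\}$ and the coloring $f$ itself. This gives an ordered $\omega$-variable sequence $W$ over $2$ such that $\{ W[u] : u \in 2^{<\omega} \}$ is $f$-monochromatic. Let $S = T_W = \{ W[u] : u \in 2^{<\omega}\}$.

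It remains to check three things inside $\RCA_0$.

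\textbf{$S$ exists as a set.} The set $S$ is $\Delta^0_1$ in $W$. Indeed, $\sigma \in S$ iff $\sigma = W[u]$ for some $u$, and any such $u$ has length at most the number of distinct variables occurring in $W \uh |\sigma|+1$. So membership reduces to a bounded search.

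\textbf{The map $u \mapsto W[u]$ is a tree-isomorphism.} This map is $W$-computable. It is injective: let $u \neq v$ be distinct words. If one extends the other, the images have different lengths, since each variable appears. Otherwise, at the first position $i$ where $u$ and $v$ differ, an occurrence of $x_i$ receives different letters. The map preserves $\preceq$ in both directions. If $u \preceq v$, then $W[u] \preceq W[v]$ because cutting happens at the first occurrence of $x_{|u|}$ and the ordering of variables makes the earlier part identical. Conversely, if $u$ and $v$ are incomparable, they differ at some $i$ below both lengths, so $W[u]$ and $W[v]$ differ at the position of $x_i$.

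\textbf{$S$ is infinite and monochromatic.} Infiniteness follows from injectivity, and monochromaticity was given by $\OVW^0$.

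All of this is a $\Delta^0_1$ argument using only $\Sigma^0_1$ induction for facts about finite words, so $\RCA_0$ suffices. The only point needing care is how ``tree-isomorphic'' is formalized in $\mathsf{TT}^1$: it should mean an order-preserving bijection for $\preceq$, possibly also preserving meets. Meet-preservation also holds here, since $W[u \wedge v]$ is the longest common initial segment of $W[u]$ and $W[v]$ cut at the variable $x_{|u\wedge v|}$. I expect no real obstacle: the substance lies entirely in \Cref{thm:main1}, and the reduction $\OVW^0 \to \mathsf{TT}^1$ is the routine computable instance-wise reduction just described.
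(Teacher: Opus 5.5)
Your proposal is correct and follows the paper's proof: both reduce the corollary to Main Theorem~\ref{thm:main1} by showing $\RCA_0 \vdash \OVW^0 \to \mathsf{TT}^1$, applying $\OVW^0$ over the alphabet $\{0,1\}$ and taking $\{W[u] : u \in 2^{<\omega}\}$ as the $\mathsf{TT}^1$-solution. You also verify the $\Delta^0_1$ definability, injectivity and order-preservation in both directions, which the paper leaves to the reader.
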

\begin{proof}
By Main Theorem~\ref{thm:main1}, it suffices to prove that $\RCA_0 \vdash \OVW^0 \to \TT^1$.
Let $f : 2^{<\omega} \to k$ be an instance of~$\mathsf{TT}^1$. By $\OVW^0$, there is an ordered $\omega$-variable sequence~$W$ over $A = \{0,1\}$ such that the set $\{W[u] : u \in A^{<\omega} \}$ is $f$-monochromatic. Note that $\{W[u] : u \in A^{<\omega} \}$ is a $\mathsf{TT}^1$-solution to~$f$.
\end{proof}

Together with the recent proof by Hwang~\cite{hwang2026conservation} that Ramsey's theorem for pairs and two colors ($\RT^2_2$) is a $\Pi^1_1$-conservative extension of~$\RCA_0 + \BSig_2$, this can be considered as one more argument towards the robustness of the levels of the inductive hierarchy for capturing the first-order parts of natural second-order statements.

Note that Dzhafarov, Solomon and Yokoyama~\cite{dzhafarov2024firstorder} defined a semantic notion of first-order part of a Weihrauch problem~$\Psf$, corresponding to the strongest Weihrauch problem whose solutions range over~$\NN$, and which is reducible to~$\Psf$. Dzhafarov, Solomon and Valenti~\cite{dzhafarov2026tree} proved that the Weihrauch first-order part of $\mathsf{TT}^1$ and $\RT^1$ coincide.

\section{Prerequisites}\label[section]{sect:prerequisites}

Before proving our main theorem, we recall some fundamental concepts of weak arithmetics which will be used in the proof, and fix some notation for variable words. 

\subsection{Weak theories}

Let $\PAm$ be the axioms of a discrete ordered commutative semiring and $\Exp$ be the statement of the totality of the exponential.

\begin{definition}[Induction scheme]
Given a family of formulas $\Gamma$, let $\mathsf{I}\Gamma$ be the following scheme for every $\Gamma$-formula $\varphi$:
$$
[\varphi(0) \wedge \forall x(\varphi(x) \to \varphi(x+1))] \to \forall y \varphi(y).
$$
\end{definition}

We are in particular interested in the schemata $\ISig_n$ and $\IPi_n$ corresponding to $\Sigma^0_n$ and $\Pi^0_n$-formulas, respectively.
The induction scheme can be extended to $\Delta^0_n$-predicates as follows:

\begin{definition}
Given $n \geq 1$, let $\IDelta_n$ be the following scheme for every $\Sigma^0_n$-formula $\varphi$ and every $\Pi^0_n$-formula $\psi$:
$$
[\forall x(\varphi(x) \leftrightarrow \psi(x)) \wedge \varphi(0) \wedge \forall x(\varphi(x) \to \varphi(x+1))] \to \forall y \varphi(y).
$$  
\end{definition}

The least induction scheme for $\Gamma$-formulas ($\mathsf{L}\Gamma$) informally says for every $\Gamma$-formula~$\varphi$, if $\varphi(x)$ holds for some~$x$, then there is a least such~$x$. The least induction scheme for $\Delta^0_n$-predicates ($\LDelta_n$) is defined accordingly.
The following collection (or bounding) principle plays an important role in weak arithmetic:

\begin{definition}[Collection scheme]
Given a family of formulas $\Gamma$, let $\mathsf{B}\Gamma$ be the following scheme for every $\Gamma$-formula $\varphi$ and every $a \in \NN$
$$
[\forall x < a\exists y \varphi(x, y)] \to \exists b \forall x < a\exists y < b\varphi(x, y)
$$
\end{definition}

The following table summarizes the hierarchy of bounding and induction principles over $\PAm + \IDelta_0 + \Exp$. See Paris and Kirby~\cite{parisNCollectionSchemasArithmetic1978} for a reference:

\begin{center}
\bgroup
\def\arraystretch{1.25}%
\begin{tabular}{|c|c|c|} \hline
Induction & Collection & Least principle \\ \hline
$\vdots$ & $\vdots$ & $\vdots$ \\ \hline
$\ISig_2 \equiv \IPi_2$ & & $\LPi_2 \equiv \LSig_2$ \\ \hline
$\IDelta_2$ & $\BSig_2 \equiv \BPi_1$ & $\LDelta_2$ \\ \hline
$\ISig_1 \equiv \IPi_1$ & & $\LPi_1 \equiv \LSig_1$ \\ \hline
$\IDelta_1$ & $\BSig_1 \equiv \BPi_0$ & $\LDelta_1$ \\ \hline
\end{tabular}
\egroup
\end{center}

Thanks to the correspondence between computability and definability, the computable sets are exactly the ones which are definable by a $\Delta_1$-predicate. The following scheme is therefore known as \emph{recursive comprehension scheme}:

\begin{definition}
Let $\GammaCA{\Delta^0_1}$ be the following scheme for every $\Sigma^0_1$-formula~$\varphi$ and $\Pi^0_1$-formula~$\psi$:
$$
\forall x(\varphi(x) \leftrightarrow \psi(x)) \to \exists X \forall x(\varphi(x) \leftrightarrow x \in X).
$$  
\end{definition}

The base theory of reverse mathematics is $\RCA_0$, standing for Recursive Comprehension Axiom, defined as $\PAm + \GammaCA{\Delta^0_1} + \ISig_1$. More recently, Simpson and Smith~\cite{simpson1986factorization} defined a weaker theory, $\RCA_0^*$, which consists of $\PAm + \GammaCA{\Delta^0_1} + \IDelta_1 + \Exp$. In particular, $\RCA_0^* \vdash \BSig_1$.
The $\RCA_0$-provably total computable functions are exactly the primitive recursive ones, while the $\RCA_0^*$-provably total computable functions are the elementary recursive ones.

\subsection{Models of arithmetic}

Structures in second-order arithmetic are of the form
$$
\M = (M, S, 0, 1, +, \cdot, <)
$$
where $M$ is the first-order part, $S \subseteq \P(M)$ is the second-order part, $0, 1 \in M$, $+, \cdot : M^2 \to M$ are binary operators and $< \subseteq M^2$ is a binary relation. We shall abbreviate it with $(M, S)$. Given a structure $\M = (M, S)$, we will not assume in general that a set $X \subseteq M$ belongs to~$S$.

Fix a primitive recursive G\"odel numbering $\langle \cdot \rangle$ of mathematical objects. A mathematical object is \emph{$M$-coded} if its G\"odel number belongs to~$M$.

\begin{definition}
A set $X \subseteq M$ is \emph{$M$-regular} (or \emph{piecewise $M$-coded}) if every initial segment is $M$-coded.
\end{definition}

Given a structure $\M = (M, S) \models \PAm + \IDelta_0 + \Exp$, 
the $\ISig_n$ (resp.\ $\IPi_n$, $\IDelta_n$) scheme is equivalent to the statement \qt{Every $\Sigma^0_n$-definable (resp.\ $\Pi^0_n$-definable, $\Delta^0_n$-definable) set is $M$-regular} (see H\'ajek and Pudl\'ak~\cite[Chapter 2]{hajek1998metamathematics}).

\begin{definition}
Let $\M_0 = (M_0, S_0)$ and $\M_1 = (M_1, S_1)$ be two structures. Then $\M_1$ \emph{$\bbomega$-extends} $\M_0$ if $M_0 = M_1$ and $S_1 \supseteq S_0$.
\end{definition}

The notion of $\bbomega$-extension should not be confused with that of $\omega$-model. In particular, $M_0$ need not be the standard integers. Given a structure $\M = (M, S)$ and a set~$G \subseteq M$, we denote by $\M[G]$ the $\bbomega$-extension of~$\M$ whose second-order part consists of all $\Delta_1(S, G)$-definable sets.

% \begin{proposition}[Folklore]
% Let $T_0$ and $T_1$ be two second-order theories.
% If every countable model of $T_0$ $\bbomega$-extends to $T_1$,
% then $T_1$ is $\Pi^1_1$-conservative over~$T_0$.
% \end{proposition}
% \begin{proof}
% Suppose that $T_0 \nvdash \forall X\varphi(X)$ for an arithmetic formula $\varphi$. By the completeness theorem for Henkin models and the downward Löwenheim–Skolem theorem, there is a countable model $\M_0 = (M, S_0) \models T_0 + \exists X \neg \varphi(X)$. Let $\M_1 = (M, S_1) \models T_1$ be an $\bbomega$-extension of $\M_0$. In particular, $X \in S_1$ and since $\varphi$ is arithmetic, $\M_1 \models \neg \varphi(X)$, so $T_1 \nvdash \forall X \varphi(X)$.
% \end{proof}

A structure $\M = (M, S)$ is \emph{topped} by a set~$Y \in S$ if $S$ is the collection of all $\Delta_1(Y)$-definable sets.

For a structure $\M = (M, S)$, a set $X \subseteq M$ is \emph{$M$-bounded} if there is some~$b \in M$ such that for every~$x \in X$, $x < b$. Otherwise, $X$ is \emph{$M$-unbounded}. In models of~$\RCA_0^*$, some sets might be unbounded, but have finite cardinality. The statement \qt{Every unbounded set is in bijection with~$\NN$} is actually equivalent to $\Sigma^0_1$-induction. We can therefore use indistinctly \emph{infinite} and \emph{unbounded} in models of~$\RCA_0$.

\subsection{Formal computability theory}

In what follows, fix a structure $\M = (M, S) \models \RCA_0^*$.

\begin{definition}
Given $X, Y \in S$, we write $X \leq_T Y$ if $X$ is $\Delta_1(Y)$-definable.
\end{definition}

Turing reducibility is robust in models of $\RCA_0^*$. In particular, it is transitive.
Fix a universal $\Sigma^0_1$-formula $\varphi(X, e)$.

\begin{definition}
Given $X \in S$, let $X' = \{ e : \varphi(X, e) \}$.
\end{definition}

If $X \leq_T Y$ with $Y' \in S$, then $X' \in S$ and $X' \leq_T Y'$. Moreover $Y \leq_T Y'$ and $Y' \not \leq_T Y$.

\begin{definition}
Given two sets $X, Y \in S$, let $$X \oplus Y = \{ 2x : x \in X \} \sqcup \{ 2x+1 : x \in Y \}$$    
\end{definition}

We have $X \leq_T X \oplus Y$ and $Y \leq_T X \oplus Y$. Moreover, $X \oplus Y$ is $\Delta_1(X, Y)$-definable.
See Chong and Yang~\cite{chongJumpSigma_ncut2007} for a more detailed exposition of computability theory in models of weak arithmetic.

\begin{lemma}[Folklore]\label[lemma]{lem:jump-bsig2-bsig1}
Let $\M = (M, S)$ be a countable structure topped by a set~$Y$.
Suppose that $Y' \leq_T P$ for some set $P \subseteq M$ such that $\M[P] \models \RCA_0^*$.
Then $\M \models \RCA_0 + \BSig_2$.
\end{lemma}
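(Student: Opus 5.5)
Since $\M$ is topped by $Y$, every set in $S$ is $\Delta_1(Y)$-definable, so any formula with set parameters from $S$ can be rewritten as a formula in the single parameter~$Y$, and it suffices to check each scheme for formulas with parameter~$Y$. Note also that $\M[P]$ extends $\M$ with the same first-order part, so first-order consequences of $\RCA_0^*$ in $\M[P]$ (in particular $\IDelta_1$ relative to $P$, and $\BSig_1$) hold in $M$ for formulas with parameter $P$. Comprehension for $\Delta_1(Y)$-definable sets holds in $\M$ by the definition of being topped, and $\PAm$ and $\Exp$ hold because they hold in $\M[P]$.

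For $\ISig_1$, I will use the characterization recalled above: it suffices to show that every $\Sigma_1(Y)$-definable set $A$ is $M$-regular. Such an $A$ is many-one reducible to $Y'$, and $Y' \leq_T P$, so $A$ is $\Delta_1(P)$-definable. Because $\M[P] \models \RCA_0^*$ includes $\IDelta_1$ relative to~$P$, every $\Delta_1(P)$-definable set is $M$-regular, so every initial segment of $A$ is $M$-coded. This gives $\ISig_1$ in $\M$, hence $\M \models \RCA_0$.

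For $\BSig_2$, I will prove the equivalent form $\BPi_1$ with parameter~$Y$. Suppose $\forall x < a\, \exists y\, \psi(x,y)$, where $\psi$ is $\Pi_1(Y)$. The relation $\psi(x,y)$ is $\Delta_1(Y')$, hence $\Delta_1(P)$, i.e.\ $\psi(x,y) \leftrightarrow \theta(x,y)$ for some $\Sigma_1(P)$ formula $\theta$. Then $\forall x<a\,\exists y\,\theta(x,y)$ is a $\Sigma_1(P)$-collection instance, and $\BSig_1$ holds in $\M[P]$ since $\RCA_0^* \vdash \BSig_1$. This yields a bound $b$ with $\forall x<a\,\exists y<b\,\theta(x,y)$, and translating back through the equivalence gives the bound for $\psi$.

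The main obstacle is the formalization of "$\Sigma_1(Y)$ is $\Delta_1(Y')$" and "$Y'\le_T P$" inside the possibly nonstandard~$M$. The equivalences must hold pointwise in $M$ via the universal $\Sigma_1$ formula, and $Y' \leq_T P$ must be read as $Y'$ being $\Delta_1(P)$-definable in $M$, with the relevant substitutions closing under $\Sigma_1(P)$ in $\M[P]$. I expect these steps to be routine, using the robustness of $\le_T$ in models of $\RCA_0^*$ recalled above. Countability of $\M$ is not needed for this argument.
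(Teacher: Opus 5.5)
Your proposal is correct and follows essentially the paper's argument. $\PAm$, $\Exp$ and comprehension are inherited from $\M[P]$ and toppedness; $\Sigma_1(Y)$ sets become $\Delta_1(P)$, so $\ISig_1$ in $\M$ reduces to induction available in $\M[P]$; and $\BSig_2$ in $\M$ reduces to $\BSig_1$ in $\M[P]$ by turning the $\Pi_1(Y)$ matrix into a $\Sigma_1(P)$ formula. The differences are only cosmetic: you route $\ISig_1$ through the $M$-regularity characterization and collection through the equivalent $\BPi_1$ form, whereas the paper translates the instances directly.
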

\begin{proof}
Since $\M[P] \models \PAm + \Exp$, so does~$\M$.
By definition of a topped model, $\M \models \GammaCA{\Delta^0_1}$.
Any $\Sigma_1(\M)$-formula can be translated into a $\Delta^0_1(\M[P])$-formula, 
so any instance of $\ISig_1$ in~$\M$ becomes an instance of~$\IDelta_0$ in~$\M[P]$. It follows that $\M \models \ISig_1$, so $\M \models \RCA_0$.
Any $\Sigma^0_2(\M)$-formula can be translated into a $\Sigma^0_1(\M[P])$-formula,
so any instance of $\BSig_2$ in $\M$ becomes an instance of $\BSig_1$ in~$\M[P]$.
Thus, $\M \models \BSig_2$.
\end{proof}

\subsection{Weak K\"onig's lemma}

A \emph{tree} is a subset $T \subseteq \NN^{<\bbomega}$ which is closed under prefix. A \emph{binary tree} is a tree $T \subseteq 2^{<\bbomega}$. A \emph{path} through a tree~$T$ is a sequence $P \in \NN^\NN$ such that every initial segment belongs to~$T$. We write $[T]$ for the (possibly empty) collection of all paths through~$T$.

\begin{statement}[Weak K\"onig's lemma]
$\WKL$: Every infinite binary tree admits an infinite path.
\end{statement}

We shall write $\WKL_0^*$ and $\WKL_0$ for the theories~$\RCA_0^* + \WKL$ and $\RCA_0 + \WKL$, respectively.  
Simpson and Smith~\cite{simpson1986factorization} proved the following theorem:

\begin{proposition}[Simpson and Smith~\cite{simpson1986factorization}]\label[proposition]{prop:rca-wkl-star}
Let $\M = (M, S) \models \RCA_0^*$, and $T$ be an infinite binary tree in~$S$. 
There is an infinite path~$P$ such that $\M[P] \models \RCA_0^*$.
\end{proposition}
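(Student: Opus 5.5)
The plan is to build $P$ by forcing with infinite subtrees, arranging that $\M[P]$ satisfies $\BSig_1$ relative to~$P$, and then to get $\IDelta_1$ from $\BSig_1$ by Slaman's theorem that $\IDelta_n \leftrightarrow \BSig_n$ over $\IDelta_0 + \Exp$. As elsewhere in the paper, I assume $\M$ is countable (this is the only case used later). The forcing conditions are the infinite binary trees $T' \subseteq T$ with $T' \in S$, ordered by inclusion. A sufficiently generic filter $\mathcal{G}$ is built along a countable enumeration of the requirements. It determines a path $P$: first extend to the subtree above a node of length $n$ with infinitely many extensions, which always exists by $\BSig_1$ in~$\M$, and do this for every~$n$.

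The axioms other than $\IDelta_1$ come for free. $\PAm$ and $\Exp$ are first-order and $M$ is unchanged. $\GammaCA{\Delta^0_1}$ holds by the definition of $\M[P]$. For $\IDelta_0$ relative to~$P$, every initial segment $P \uh n$ is a node of~$T$ and hence $M$-coded. A $\Delta_0(S,P)$-formula with bounded quantifiers below~$b$ is therefore equivalent to a $\Delta_0$-formula with the coded parameter $P \uh c$, for a suitable $c$ computed from~$b$ using $\Exp$. Thus $\IDelta_0(P)$ follows from $\IDelta_0$ in~$\M$.

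The key step is to force $\BPi_0$ relative to~$P$, which is equivalent to $\BSig_1(P)$. Fix $a \in M$ and a $\Delta_0$-formula $\theta(P \uh \cdot, x, y)$ with parameters from~$S$, using only a bounded part of~$P$. For a condition $T'$ and $x < a$, let $T'_x = \{ \sigma \in T' : \forall y < |\sigma|\, \neg\theta(\sigma, x, y) \}$, suitably normalized so that it is a tree in~$S$. There are two cases, decided classically.
\begin{itemize}
\item If some $T'_x$ is infinite, we pass to~$T'_x$. Every path through it satisfies $\forall y\, \neg\theta(P,x,y)$, so the hypothesis of the instance fails.
\item If every $T'_x$ is finite, then for each $x<a$ there is a level at which $T'_x$ dies, and this is a $\Sigma^0_1$ property of~$x$. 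By $\BSig_1$ in~$\M$ there is a single bound~$b$ at which all the $T'_x$ for $x<a$ have died. Hence every path through~$T'$, and in particular~$P$, satisfies $\forall x<a\, \exists y<b\, \theta(P,x,y)$.
\end{itemize}
Either way $T'$ itself forces the instance, and genericity over the countably many instances gives $\M[P] \models \BSig_1$.

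Putting these together, $\M[P] \models \PAm + \IDelta_0 + \Exp + \BSig_1$. Slaman's theorem then yields $\IDelta_1$ in~$\M[P]$, so $\M[P] \models \RCA_0^*$. I expect the main obstacle to be the passage from the bounding scheme to $\Delta_1$-induction. Without $\ISig_1$ one cannot code bounded $\Sigma_1$ sets directly, so the usual low-basis or $\Pi^0_1$-class arguments do not apply; this is why the argument is routed through $\BSig_1$ and Slaman's equivalence. A secondary care point is checking that the trees $T'_x$ are really in~$S$ and form valid conditions, that is, that they are $\Delta_1$-definable from $T'$ and the parameters of~$\theta$.
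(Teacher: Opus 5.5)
Your argument is correct for countable $\M$, which is the only case in which the paper invokes the proposition. The paper gives no proof of its own and simply cites Simpson and Smith. Your approach is the standard way this result is proved: forcing with infinite subtrees of $T$ lying in $S$ secures $\BSig_1$ relative to $P$, and $\IDelta_0$ relative to $P$ comes from the fact that every initial segment of $P$ is $M$-coded.

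A few points of accuracy:

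\emph{Which direction of Slaman's theorem you use.} The implication you need at the end, $\IDelta_0 + \BSig_1 \vdash \IDelta_1$, is the classical elementary direction. One application of collection bounds, below some $d$, the witnesses for each $x$ up to the induction bound, either to the $\Sigma_1$ form or to the failure of the $\Pi_1$ form. After that the formula is equivalent to a $\Delta_0$ one and $\IDelta_0$ applies. Slaman's theorem is the converse, $\IDelta_1 \vdash \BSig_1$ over $\IDelta_0 + \Exp$. That converse is what stands behind the paper's remark $\RCA_0^* \vdash \BSig_1$, since the paper defines $\RCA_0^*$ with $\IDelta_1$.

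\emph{Where the real input lies.} You use collection in $\M$ twice: to find a stem of length $n$ with unboundedly many extensions, and to get the uniform level $b$ in your second case. So the genuine input is collection in the ground model. The step you single out as the main obstacle is routine.

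\emph{Comprehension in $\M[P]$.} $\GammaCA{\Delta^0_1}$ in $\M[P]$ is not literally automatic from the definition. You must show that sets $\Delta_1$-definable from parameters in $\M[P]$ are already $\Delta_1$-definable from parameters in $S \cup \{P\}$. Likewise, $\BSig_1$ with parameters from $\M[P]$ must be reduced to $\BSig_1$ with parameters in $S \cup \{P\}$. Both require putting bounded combinations of $\Sigma_1$ formulas into $\Sigma_1$ form, which uses the $\BSig_1$ you have just forced. This works, but it should be stated and placed after that step.

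\emph{Case 1 wording.} In your first case it is $T'_x$, not $T'$, that forces the instance (vacuously).
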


We shall also need the following formalized low basis theorem. Note that since $\M \models \BSig_2$, then $\M[Y'] \models \RCA_0^*$, so the notion of Turing reducibility is still robust.

\begin{proposition}[H\'ajek~\cite{hajek1993interpretability}]\label[proposition]{prop:wkl-formalized-low}
Consider a countable model $\M = (M,S) \models \RCA_0 + \BSig_2$ topped by a set $Y \in S$.
Then for every infinite tree $T \in S$, there exists $G \subseteq M$ such that 
$G$ is a path in $T$ and $Y' \geq_T (G \oplus Y)'$.
In particular, $\M[G] \models \RCA_0 + \BSig_2$
\end{proposition}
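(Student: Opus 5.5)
This is the formalized low basis theorem: a Jockusch--Soare style forcing with $\Pi^0_1$ classes, run in $\M$ with oracle $Y'$. I would work inside the topped model. Since $\M \models \BSig_2$ and $\M$ is topped by $Y$, the structure $\M[Y']$ satisfies $\RCA_0^*$, as noted before the statement. So $\Delta_1(Y')$-definable constructions of length $M$ are available, and they behave well as long as we only use $\IDelta_1$ relative to $Y'$, which is $\IDelta_2$-like in $\M$, together with $\BSig_1$ relative to $Y'$.

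\textbf{Construction.} Build a $Y'$-computable sequence of infinite $Y$-computable trees $T = T_0 \supseteq T_1 \supseteq \cdots$, each given by an index. At stage $e$, consider the tree
\[
U_e = \{\sigma \in T_e : \Phi_e^{\sigma \oplus Y}(e) \text{ has not halted within } |\sigma| \text{ steps}\}.
\]
Whether $U_e$ is infinite is a $\Pi^0_1(Y)$ question, so $Y'$ can decide it. If $U_e$ is infinite, set $T_{e+1} = U_e$; otherwise set $T_{e+1} = T_e$. In both cases we record one bit $b_e$. The map $e \mapsto (T_e, b_e)$ is a primitive recursion in $Y'$. To obtain a genuine sequence in $\M[Y']$, I would define the finite sequence $\langle T_0, \dots, T_e \rangle$ by $\Delta_1(Y')$-recursion and prove its existence for all $e$ by $\IDelta_1$ in $\M[Y']$. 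The statement ``there is a correct computation sequence of length $e$'' is $\Sigma_1(Y')$, so this needs $\ISig_1$ relative to $Y'$, which we do not have.

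\textbf{Main obstacle.} This failure of induction in $\M[Y']$ is where I expect the real difficulty. I would first try to use that each step is a single $Y'$-query, so the sequence of bits $b_0 \dots b_{e-1}$ is determined by $e$ oracle answers. The claim ``$\sigma$ codes the first $e$ correct answers'' is then $\Delta_1(Y')$ with a bounded search over $2^e$ candidates. Existence of such a $\sigma$ can be shown by $\IDelta_1(Y')$ induction on $e$, using $\Exp$ and $\BSig_1$ in $\M[Y']$ to bound the search. If that fails, I would fall back on Hájek's route: use \Cref{prop:rca-wkl-star} on a low-basis tree of ``consistent guess sequences'', or pass through the arithmetized completeness argument. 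In either case, the path is $G = \bigcap_e [T_e]$. It is $Y'$-computable, since at each stage $Y'$ can extend along nodes with infinitely many extensions in $T_{e+1}$, using that all the trees are infinite and nested.

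\textbf{Verification.} Let $G$ be the path obtained above. Then $e \in (G \oplus Y)'$ holds iff $b_e$ says that $U_e$ was finite. Hence $(G \oplus Y)' \leq_T Y'$, provided both directions are $\Delta_1(Y')$, which they are by construction. For the final claim, take $P = Y'$ in \Cref{lem:jump-bsig2-bsig1}, applied to $\M[G]$ topped by $G \oplus Y$. Since $\M[Y'] \models \RCA_0^*$, the lemma yields $\M[G] \models \RCA_0 + \BSig_2$.
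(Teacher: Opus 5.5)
The paper does not prove this proposition; it only cites H\'ajek. So I judge your argument on its own. Its core is sound, and the obstacle you flag goes away under your own first repair, which you should adopt outright rather than hedge.

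Code the state after $e$ stages by $\sigma\in 2^e$, and put $T_\sigma=\{\tau\in T:\forall i<e\,(\sigma(i)=0\to\Phi_i^{\tau\oplus Y}(i)\text{ has not halted within }|\tau|\text{ steps})\}$. Say $\sigma$ is correct if, for all $i<e$, $\sigma(i)=1$ iff the index of the $\Sigma_1(Y)$ sentence ``$T_{(\sigma\uh i)0}$ is finite'' lies in $Y'$. This is a $\Delta_0(Y')$ condition, so ``some $\sigma\in 2^e$ is correct'' is also $\Delta_0(Y')$. Then $\IDelta_0(Y')$ in $\M[Y']\models\RCA_0^*$, with $\Exp$ for $2^e$, gives a unique correct $\sigma_e$ for every $e$. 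Not even $\BSig_1(Y')$ is needed. Equivalently, $\sigma_e$ is the lexicographically least $\sigma\in 2^e$ with $T_\sigma$ infinite. This is the usual closed-form presentation, and it is the same ``decide a whole block at once'' idea as the paper's Shore blocking.

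The step that does fail as written is the construction of the path. First, $\bigcap_e[T_e]$ is a class, not a set. Second, extending a current node $\tau$ along nodes that are extendible in $T_{e+1}$ can get stuck, and nestedness and infinitude of the trees do not prevent this. Suppose every path of $T_e$ through $\tau$ makes $\Phi_e^{G\oplus Y}(e)$ converge, while some path incomparable with $\tau$ makes it diverge. Then $U_e$ is infinite, $T_{e+1}=U_e$, and $\tau$ has only finitely many extensions in $T_{e+1}$.

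You can repair this in either of two ways.
\begin{itemize}
\item Restrict to cones. Form $U_e$ inside the cone of the current node $\tau_e$, and let $Y'$ also choose a bit $j$ such that $T_{e+1}$ is infinite above $\tau_e j$. The state becomes a pair of strings in $2^e$, correctness stays $\Delta_0(Y')$, and $G=\bigcup_e\tau_e$ is $\Delta_1(Y')$.
\item Do not build $G$ by hand. The set $\{\tau:\tau\in T_{\sigma_{|\tau|}}\}$ is an infinite $\Delta_1(Y')$ binary tree; closure under prefixes uses $\sigma_n\uh k=\sigma_k$ and monotonicity of halting. So \Cref{prop:rca-wkl-star} applied in $\M[Y']$ yields a path $G$. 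Any such $G$ lies in every $[T_{\sigma_e}]$, whence $e\in(G\oplus Y)'$ iff $\sigma_{e+1}(e)=1$.
\end{itemize}

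Finally, to invoke \Cref{lem:jump-bsig2-bsig1} for $\M[G]$ with $P=Y'$, you need $\M[G][Y']\models\RCA_0^*$. This holds because $G\le_T(G\oplus Y)'\le_T Y'$, which gives $\M[G][Y']=\M[Y']$.
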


Recall that given an index of a co-$X$-c.e.\ tree~$T$, one can computably obtain an index of a primitive $X$-recursive infinite tree~$S$ such that if $[T] \neq \emptyset$, then $[S] = [T]$.
Let $(T^X_n)_{n \in \NN}$ be an effective listing of all $X$-primitive recursive infinite binary trees.

\begin{definition}
A set $Q = \bigoplus_n Z_n$ \emph{codes a Scott ideal} if for every $i, j \in \NN$,
$$Z_{\langle i, j\rangle} = [T_i^{Z_0 \oplus \cdots \oplus Z_j}]$$
\end{definition}

If~$Q = \bigoplus_n Z_n$ codes a Scott ideal, then $\I = \{ Z_n : n \in \NN \}$ is a Turing ideal whose closure properties are effective. Given any~$X \in \I$, a \emph{$Q$-index} of~$X$ is any~$n \in \NN$ such that $X = Z_n$. Given $e, i \in \NN$, one can compute some~$j \in \NN$ such that if $\Phi_e^{Z_i}$ is total, then it is equal to~$Z_j$. The basic operations such as $(i, j) \mapsto k$ such that $X_k = X_i \oplus X_j$ are also computable.

\begin{lemma}[Folklore]\label[lemma]{lem:wkl-scott-ideal}
Let $\M = (M, S) \models \RCA_0^*$ and $X \in S$.
There exists a $\Delta_1(X)$-definable infinite binary tree~$T \in S$ such that every $Q \in [T]$ codes a Scott ideal containing~$X$.
\end{lemma}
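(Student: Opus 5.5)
We build the tree directly as the set of finite approximations to a code of a Scott ideal, and check that it is $\Delta_1(X)$ and infinite. Write a candidate $Q$ as $\bigoplus_n Z_n$ and use a pairing $\langle i,j\rangle$ with $\langle i,j\rangle \geq j$ and $\langle i,j\rangle \geq 1$ for all $i,j$, which we may assume by convention. The constraints are $Z_0 = X$ (so the ideal contains~$X$) and $Z_{\langle i,j\rangle} \in [T_i^{Z_0\oplus\cdots\oplus Z_j}]$. Each constraint is $\Pi^0_1$ relative to $X \oplus Q$, since membership in a primitive recursive tree is a bounded check.

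Let $T$ consist of all binary strings $\sigma$ satisfying the following conditions:
\begin{itemize}
\item $\sigma$ agrees with $X$ on the $Z_0$-coordinates below $|\sigma|$;
\item for every $n = \langle i,j\rangle < |\sigma|$, the finite string $Z_n^\sigma$ read off from $\sigma$ lies in $T_i^{Z_0^\sigma\oplus\cdots\oplus Z_j^\sigma}$, where the oracle queries are answered by $\sigma$ whenever possible;
\item the check is performed only up to a length bound computable from $|\sigma|$.
\end{itemize}
The clause ``whenever possible'' is made precise by declaring that a query outside the domain of $\sigma$ is not checked. The point of the primitive recursive format is that deciding $\tau \in T_i^{Y}$ uses $Y$ only below a primitive recursive bound in $|\tau|$. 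Hence membership of $\sigma$ in $T$ is decided by a bounded computation from $X{\uh}|\sigma|$. This makes $T$ $\Delta_1(X)$, and $T \in S$ by $\Delta^0_1$-comprehension. We also need the elementary-recursive version, since only $\Exp$ is available; we take the listing $(T^X_n)$ to consist of elementary trees so that the bounds remain total in $\RCA_0^*$. Closure of $T$ under prefix is clear. Any path $Q$ of $T$ satisfies every constraint, because each constraint fails only at a finite stage. So $Q$ codes a Scott ideal containing~$X$.

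The main obstacle is showing that $T$ is infinite in $\M$ without $\ISig_1$. One might argue that any finite $\sigma$ extends because each $T_i^{\cdots}$ is infinite. That argument iterates choices and needs induction over nested $\Pi^0_1$ statements.

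We therefore show infinitude by a $\Delta_0(X)$ construction of an element of each length $\ell$. The construction proceeds by bounded recursion on $n < \ell$, in the order $n = 0, 1, \dots$. This order is legitimate because $Z_n$ depends only on $Z_0,\dots,Z_j$ with $j < n$, since $\langle i,j\rangle > j$. At coordinate $n = \langle i,j\rangle$ we choose the leftmost string of the required length in the finite tree $T_i^{Z_0^\sigma\oplus\cdots\oplus Z_j^\sigma}$ restricted to that level. Infinitude of the tree with the true oracle would not be the right justification here. The correct reason is that every primitive recursive tree in the listing is infinite for every oracle, by the normalization recalled before the definition. Concretely, $T_i^Y$ is built so that a leaf at level $s$ is extended by a default string whenever the co-c.e.\ tree dies. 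Hence each level of $T_i^{Y}$ is nonempty for every oracle string, and the leftmost element exists and is found by a bounded search.

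This recursion is elementary in $X{\uh}\ell$, so it is carried out in $\IDelta_0 + \Exp$. It yields $\sigma \in T$ of length~$\ell$, so $T$ is infinite, as required.
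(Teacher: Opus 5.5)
The paper gives no proof of this lemma (it is cited as folklore), so there is nothing to compare against. Your construction is the standard one and it is correct. In particular, you rightly get infinitude from a bounded recursion that builds a node of each length, rather than from iteratively extending nodes.

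Two points need tightening.

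\textbf{The pairing convention.} The convention you state, $\langle i,j\rangle \ge j$ and $\langle i,j\rangle \ge 1$, does not yield the strict inequality $\langle i,j\rangle > j$ that you later use. Take, e.g., Cantor pairing plus one, which satisfies the strict inequality. This also removes the circularity that the paper's definition has at $\langle 0,0\rangle = 0$, where $Z_0$ would have to lie on $T_0^{Z_0}$.

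\textbf{Uniformity in nonstandard indices.} Since $n = \langle i,j\rangle < |\sigma|$ ranges over nonstandard indices, you need two things, both uniform in $i$:
\begin{itemize}
\item[(i)] a single exponentially bounded $\Delta_0$ predicate for ``$\tau \in T_i^\rho$'';
\item[(ii)] a proof in $\IDelta_0 + \Exp$ that every level of every $T_i^\rho$ is nonempty.
\end{itemize}
Restricting the listing to elementary trees does not give (i): the universal evaluator for elementary (or primitive recursive) trees is not elementary, so it is not provably total in $\RCA_0^*$. Truncating the checks at a bound computable from $|\sigma|$ does give (i), but not (ii). For a nonstandard index of an arbitrary program, nothing in the model prevents the computation from halting and rejecting a whole level.

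What supplies both properties at once is the normalization you invoke at the end. Take stage approximations of a co-c.e.\ tree, computed with step bound equal to the stage. Accept all strings compatible with the approximation at every stage where it is still alive, and extend by a default string once it dies. The resulting predicate is $\Delta_0$ with exponential bounds. Nonemptiness of each level is proved by a $\Delta_0$ argument: take the last live stage $K \le \ell$, a node of length $K$, and pad it with zeros.

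If you define the listing this way, your separate truncation clause becomes unnecessary, and the rest of your argument goes through as written. That covers prefix-closure, paths satisfying every constraint because each violation is witnessed at a finite stage, and the $\IDelta_0$ induction on the number of columns built.
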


Weak K\"onig's lemma can actually prove the existence of paths through a larger family of finitely branching trees: the ones for which one can uniformly bound the maximum label at every level. A tree $T \subseteq \NN^{<\bbomega}$ is \emph{$g$-bounded} for some function $g : \NN \to \NN$ if for every $\sigma \in T$ and $x < |\sigma|$, $\sigma(x) < g(x)$.

\begin{lemma}[Folklore]\label[lemma]{lem:wkl-to-bounded}
$\WKL_0^*$ proves that for every infinite tree $T \subseteq \NN^{<\bbomega}$ which is $g$-bounded for some $g : \NN \to \NN$, there is an infinite path $P \in [T]$.
\end{lemma}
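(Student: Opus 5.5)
We reduce to \Cref{prop:rca-wkl-star}, working inside a model $\M = (M,S) \models \WKL_0^*$; by completeness it suffices to argue semantically. The plan is to compress the $g$-bounded tree $T \subseteq \NN^{<\bbomega}$ into a binary tree $\hat T \in S$ whose paths decode to paths through $T$.

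\textbf{Encoding.} Each value $\sigma(x) < g(x)$ is written in binary using exactly $\ell(x)$ bits, where $\ell(x) = g(x)$ (or $\lceil \log_2 g(x)\rceil$; the crude choice avoids logarithms and is harmless). Set $L(n) = \sum_{x<n} \ell(x)$, which is total and $\Delta_1(g)$ since it is a bounded sum. A binary string $\tau$ with $L(n) \le |\tau| < L(n+1)$ decodes to a sequence $\sigma_\tau$ of length $n$, with a partial block of length $|\tau|-L(n)$ for the $n$-th entry. Put $\tau \in \hat T$ iff
\begin{itemize}
\item every complete block of $\tau$ encodes a number below $g(x)$,
\item $\sigma_\tau \in T$, and
\item the partial block extends to some value $v < g(n)$ with $\sigma_\tau{}^\frown v \in T$.
\end{itemize}
The last condition is a bounded search below $g(n)$, so it is $\Delta_0$ in $T \oplus g$. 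Hence $\hat T$ is $\Delta_1(T \oplus g)$-definable and belongs to $S$ by $\GammaCA{\Delta^0_1}$. Closure under prefixes is immediate from the closure of $T$.

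\textbf{Infinitude.} Given a length $m$, choose $n$ with $L(n) > m$. Finding such an $n$ is the step needing some care in $\RCA_0^*$: $L$ is monotone and $L(n) \ge n$ whenever $\ell \ge 1$, so $n = m+1$ works. Since $T$ is infinite, it has an element $\sigma$ of length $n$, and the encoding of $\sigma$ truncated to length $m$ lies in $\hat T$. Thus $\hat T$ contains strings of every length and is infinite in $\M$. Here ``infinite'' should be read as ``has elements of every length''. The only delicate point is reconciling this reading with ``unbounded'' under mere $\IDelta_1$, but the paper's \Cref{prop:rca-wkl-star} and the $\WKL$ statement are presumably meant in this sense.

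\textbf{Decoding.} Apply $\WKL$ to obtain $P \in [\hat T]$ in $S$. Define $Q(x)$ as the number encoded by $P$ on the interval $[L(x), L(x+1))$. This $Q$ is $\Delta_1(P \oplus g)$, so $Q \in S$, and every initial segment $Q \uh n$ equals $\sigma_{P\uh L(n)} \in T$. Hence $Q \in [T]$.

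The main obstacle is purely formal: ensuring that $L$, the block decoding, and the bounded extension check are $\Delta_1$ and total using only $\IDelta_1 + \Exp$. This holds because all of them are bounded computations relative to $g$ and $T$.
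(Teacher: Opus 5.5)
Your proof is correct and follows the same route as the paper's sketch. The paper assumes without loss of generality that $g(x) = 2^{h(x)}$, encodes each level $x$ of $T$ into $h(x)$ levels of a binary tree, and applies $\WKL$; your choice $\ell(x) = g(x)$ is exactly this with $h = g$, and you additionally spell out the prefix-closure, infinitude and decoding checks that the paper leaves implicit (including the totality of $L$, which in $\RCA_0^*$ relies on $\BSig_1$ to bound $g$ on $[0,n)$).
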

\begin{proof}[Proof sketch]
Assume without loss of generality that $g(x) = 2^{h(x)}$ for some function $h : \NN \to \NN$.
Encode every level~$x$ of $T$ into $h(x)$ many levels of a binary tree.
\end{proof}

% Every $\Sigma^0_1$ formula $\varphi(G)$ can be expressed in normal form as $\exists k \psi(G \uh_k)$, where $\psi(\sigma)$ is a monotonous $\Sigma^0_1$ formula, that is, if $\psi(\sigma)$ and $\sigma \preceq \tau$, then $\psi(\tau)$ holds. 
% All over this paper, we shall use the following standard compactness argument:

% \begin{lemma}[Folklore]\label[lemma]{lem:compactness-big2-wkl}
% $\WKL_0 + \BSig_2$ proves that for every $\Sigma^0_1$ formula~$\varphi(G) \equiv \exists k \psi(G \uh k)$ in normal form such that $\forall A \varphi(A)$ holds, there exists some $N_1 \in \NN$ such that for every $\sigma \in 2^{N_1}$, $\psi(\sigma)$ holds. Moreover, for every $\Delta^0_2$ predicate~$A$, $\varphi(A)$ holds.
% \end{lemma}
% \begin{proof}
% Suppose there is no such~$N_1$. Let~$T$ be the tree of all $\sigma \in 2^{<\bbomega}$ such that $\neg \psi(\sigma)$. By monotonicity of~$\psi$, $T$ is a tree, and by assumption, it is infinite. By $\WKL_0$, there is some~$A \in [T]$. Then for every~$k$, $\neg \psi(A \uh k)$, so $\neg \varphi(A)$, contradicting our assumption.
% Let~$N_1$ witness the statement of the lemma and let $A$ be a $\Delta^0_2$ predicate. By $\BSig_2$, $A$ is regular, so $A \uh_{N_1}$ is coded. By choice of~$N_1$, $\psi(A \uh_{N_1})$ holds, so $\exists k \psi(A \uh_k)$ holds, hence $\varphi(A)$ holds.
% \end{proof}

\subsection{Variable words}

We shall adopt the same notation conventions as~\cite{liu2026reverse} for variable words, that we now recall. Those definitions have to be understood in the formal setting. In particular, $\bbomega$ will be interpreted by $M$ rather than~$\omega$ in a structure $\M = (M, S)$.

A finite or infinite \emph{word} over a finite alphabet~$A$ is a map $w : \alpha \to A$ for some~$\alpha \leq \bbomega$.
We write $|w|$ for its \emph{length}~$\alpha$. Given $\beta \leq \bbomega$, an \emph{ordered $\beta$-variable word} over~$A$ is a word $w$ over the alphabet $A \sqcup \{ x_i : i < \beta \}$ where each $x_i$ appears at least once, and the last occurrence of $x_i$ appears before the first occurrence of $x_{i+1}$. We write $A^{\alpha, \beta}_{<}$ and $A^{<\alpha, \beta}_{<}$ for the sets of all ordered $\beta$-variable word of length~$\alpha$, and smaller than~$\alpha$, respectively. Given $\gamma \leq \beta \leq \bbomega$, a $\beta$-variable word~$w$ over~$A$ and a $\gamma$-variable word~$u$ of length~$|u| \leq \beta$, we write $w[u]$ for the $\gamma$-variable word over~$A$ where each occurrence of $x_j$ is replaced with $u(j)$, and cut before the first occurrence of $x_{|u|}$. If $|u| = \beta$, then $|w[u]| = |w|$.

Given ordered variable words~$u$ and $v$, when doing the concatenation $u \cdot v$, we will always assume that the variable kinds of~$u$ and $v$ are from disjoint spaces, so that there is no variable kind clash.
Given $\gamma \leq \beta \leq \bbomega$ and an ordered $\beta$-variable word~$w$, we let $\OSub^\gamma_{A}(w)$ be the set of all ordered $\gamma$-variable words generated by~$w$, that is,
$$
\OSub^\gamma_{A}(w) =  \{ w[u] : u \in A^{\leq\beta, \gamma}_{<} \}
$$
Most of the time, the alphabet~$A$ will be clear from context, and we shall simply write $\OSub^\gamma(w)$ for $\OSub^\gamma_A(w)$.
Let
\begin{align}
 \OSub(w)&:= \bigcup\limits_{0\leq \gamma\leq \beta}\OSub^\gamma(w).
 \end{align}
Note that $\OSub(w)$ includes in particular $\OSub^0(w)$, that is, the set of words generated by~$w$. We write $\WSub(w)$ for $\OSub^0(w)$.
Last, we define
\begin{align}\nonumber
\OSub^=(w) &:= \big\{ v\in \OSub(w):|v| = |w|  \big\}\\ \nonumber
\OSub^*(w) &:=
\big\{
v\in\OSub(w): |v|<|w|
\big\}.
\end{align}
The sets $\OSub^{\gamma,=}(w)$, $\OSub^{\gamma,\star}(w)$, $\WSub^=(w)$ and $\WSub^*(w)$ are defined accordingly.

% \subsection{Infinitely often OVW}

% The notion of forcing used by Liu and Patey~\cite{liu2026reverse} to build solutions to $\OVW^0$ actually constructs a weaker object, which is monochromatic at only infinitely many levels.

% \begin{definition}
% Let $f : A^{<\bbomega} \to \ell$ be a coloring for a finite alphabet~$A$ and some~$\ell \geq 1$.
% An ordered $\bbomega$-variable word~$W$ is \emph{$f$-infinitely-often-monochromatic} if for infinitely many $n\in\NN$, $W[A^n] = \{ W[u] : u \in A^n \}$ is $f$-monochromatic.
% \end{definition}

% \ludovic{TODO, maybe ignore the concept and integrate it into the proof.}

\section{Conservation theorem}\label[section]{sect:conservation}

The goal of this section is to prove the following one-step extension theorem.

\begin{proposition}\label[proposition]{prop:ovw-step}
Consider a countable model $\M = (M,S) \models \RCA_0 + \BSig_2$ topped by a set $Y \in S$. Fix an $M$-bounded alphabet~$A$, some $\ell \in M$ and some set~$P \gg Y'$ such that $\M[P] \models \RCA_0^*$. For every coloring $f : A^{<M} \to \ell$ in~$S$, there is an ordered $M$-variable sequence~$W$ over~$A$ such that  
\begin{enumerate}
    \item $\WSub^*(W)$ is $f$-monochromatic ;
    \item $P \geq_T (W \oplus Y)'$.
\end{enumerate}
\end{proposition}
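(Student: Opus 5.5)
We plan to formalize the forcing argument of Liu and Patey behind \Cref{thm:ovw0-weakly-low} inside $\M[P]$. We build $W$ as the limit of a $P$-computable sequence of finite conditions. A condition is a pair $(w, X)$ where $w$ is a finite ordered variable word with $\WSub^*(w)$ $f$-monochromatic of a fixed color $c$. The reservoir $X$ is an ordered $M$-variable sequence that is low over $Y$ in the formalized sense (i.e.\ $(X\oplus Y)' \leq_T Y'$), with every extension $w \cdot v$, for $v$ taken from $\OSub(X)$ suitably relativized, remaining a candidate. The color $c$ and the initial reservoir will be chosen by a preliminary largeness argument. This argument is the formal version of the Carlson--Simpson/Hales--Jewett step: some color $c$ is \qt{large}, meaning that every finite ordered variable word in the reservoir can be extended to realize $c$ on all its substitutions. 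Since $\BSig_2$ holds and $\ell \in M$, the pigeonhole over colors is an instance of $\BSig_2$/$\RT^1$, which $P$ can decide using $P \gg Y'$.

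The construction proceeds in stages $s \in M$, uniformly $P$-computable. At stage $s$ we handle two requirements. First, we extend $w_s$ by one more variable block while keeping $\WSub^*$ monochromatic in color $c$. Second, we decide the $s$-th $\Sigma^0_1(W\oplus Y)$ question $\Phi_s^{W\oplus Y}(s)\downarrow$. To decide it, we ask whether the set of reservoirs $X'$ within the current class for which no extension $w\cdot v$ with $v\in\OSub(X')$ (compatible with color $c$) forces convergence is nonempty. This class is a $\Pi^0_1(Y)$-class of $g$-bounded trees. By \Cref{lem:wkl-to-bounded} and \Cref{prop:wkl-formalized-low}, either it has a member low over $Y$, which then forces divergence, or it is empty. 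In the empty case, compactness gives a finite extension forcing convergence. Deciding which case holds is a $\Sigma^0_1(Y')$ question, hence $P$-computable, and $P \gg Y'$ lets $P$ pick a path coded via a Scott ideal as in \Cref{lem:wkl-scott-ideal}. We keep track of reservoirs by $Q$-indices so that the whole construction is uniform in $P$. Then $W=\bigcup_s w_s$ is $\Delta_1(P)$, and the jump $(W\oplus Y)'$ is $\Delta_1(P)$ because each question is settled at stage $s$. Every variable appears because of the first requirement.

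The main obstacle is the combinatorial extension step: showing that the largeness of color $c$ is preserved when passing to the low reservoir chosen in the forcing-divergence case, so that the first requirement can still be met. This has to hold in a nonstandard model with only $\BSig_2$. In Liu--Patey this uses a Hales--Jewett/Carlson--Simpson finite density argument whose bounds are at least primitive recursive, and we must verify that those bounds exist in $\M$. We would first try to express largeness as a $\Pi^0_1(X\oplus Y)$ property of reservoirs, so that it is preserved automatically by intersecting classes. We would then invoke the finite Hales--Jewett theorem, provable in $\RCA_0$, relative to each reservoir. The second subtlety is verifying that the stage-by-stage construction, indexed by $M$, is legitimate inside $\M[P]\models\RCA_0^*$, i.e.\ that $\IDelta_1$ relative to $P$ suffices. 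Running the recursion along $P$ handles this, with the limit existing by $\Delta_1(P)$-comprehension.
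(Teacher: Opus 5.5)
\textbf{Verdict: genuine gaps.} There are two, and they sit exactly where you locate the ``main obstacle'' and the ``second subtlety''. Neither is a routine verification; each is where the proposed approach breaks down.

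\textbf{The forcing notion.} With a single stem $w$ and a colour $c$ fixed in advance, you need a property of $(w,X,c)$ that holds initially, survives the restriction of the reservoir used to force divergence, and is reproduced after the stem is extended. You do not supply one, and your divergence step can destroy extendibility. Suppose $\Phi_e^{G\oplus Y}(e)$ halts as soon as $G$ extends $w$ by a block keeping $\WSub^*$ of colour $c$. Then the $\Pi^0_1$ class you query consists exactly of the sub-reservoirs on which $w$ has no colour-$c$ extension at all. If it is nonempty, you pick a low member, ``force divergence'' vacuously, and the condition is dead. Making largeness part of the class ruins its $\Pi^0_1$ form, and a convergent extension $w\cdot v$ still need not be large for the tail reservoir.

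This is precisely why Liu--Patey, and the paper, abandon single stems. They use $\PP$-conditions $(I,a)$ with a finite length-homogeneous set $I$ of stems and the $f$-matching property, which is automatically preserved when only the reservoir shrinks, so the $\Pi^0_1$ side is harmless. The $\Sigma^0_1$ side uses a forcing question quantifying over a whole level of the Graham--Rothschild tree $\T_{A,\ell\cdot\card I}$, so the new stem set is again $f$-matching (\Cref{lem:sigma01-extension}), and dead stems are pruned by \Cref{lem:deletion}. The generic is only monochromatic at unboundedly many levels, with varying colours. The colour is chosen at the very end by $\RT^1$ in $\M[G]$, and letters are then substituted for the variables at the unselected levels.

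\textbf{The weak-induction bookkeeping.} $\IDelta_1$ relative to $P$ does not make an $M$-indexed recursion total. $\M[P]$ and $\M[Y']$ only satisfy $\RCA_0^*$, so the set $J$ of stages at which the iteration is defined is a $\Sigma_1$-definable cut, which may be proper. Settling only the $s$-th jump question at stage $s$ then decides $(W\oplus Y)'$ only on $J$. Likewise, adding one variable per stage may yield only a $J$-variable word.

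The paper uses Shore blocking instead. At stage $s$ it decides all $e<r(s)=\max\{s,t(I_s),a_s\}$ simultaneously and forces $t(I_{s+1})\ge r(s)$. Then $\{r(s):s\in J\}$ and the stem lengths are unbounded even when $J$ is proper; otherwise $J$ would be a proper $\Delta_1(Y')$ cut. Deciding $r$ formulas at once cannot be done by iterating a one-formula decision, since that reintroduces the same cut problem. This is the new combinatorics of \Cref{lem:pi01-extension} and \Cref{lem:simultaneous-decision}:
\begin{itemize}
\item a counting reduction to monotone families;
\item products $\T^d$ of Graham--Rothschild trees;
\item $M$-regularity, from $\BSig_2$, of the table recording which classes $\C^d_\beta$ are nonempty;
\item a bounded search for the least $d^*$ with $g(d^*+1)=g(d^*)$.
\end{itemize}

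Finally, in the paper $W$ is not a union of stems. The sequence of conditions is $Y'$-computable, and $P$ picks a path through the $Y'$-computably bounded tree of stems via \Cref{lem:wkl-to-bounded}.
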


Before proving \Cref{prop:ovw-step}, let us deduce the $\Pi^1_1$-conservation theorem from it.
First, by Simpson and Smith~\cite{simpson1986factorization}, one can always find a set~$P$ satisfying the hypothesis of \Cref{prop:ovw-step}. We therefore deduce the following corollary:

\begin{corollary}\label[corollary]{cor:ovw-step-improved}
Consider a countable model $\M = (M,S) \models \RCA_0 + \BSig_2$ topped by a set $Y \in S$. Fix an $M$-bounded alphabet~$A$, some $\ell \in M$, and a coloring $f : A^{<M} \to \ell$ in~$S$.
Then there is an ordered $M$-variable sequence~$W$ over~$A$ such that 
\begin{enumerate}
    \item[(1)] $\WSub^*(W)$ is $f$-monochromatic ;
    \item[(2)] $\M[W] \models \RCA_0 + \BSig_2$.
\end{enumerate}
\end{corollary}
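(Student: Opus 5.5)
The corollary follows by combining \Cref{prop:ovw-step} with Simpson and Smith's \Cref{prop:rca-wkl-star}, and then \Cref{lem:jump-bsig2-bsig1}. There are three steps: produce a suitable $P$, apply the proposition, and check that the resulting $\bbomega$-extension satisfies $\RCA_0 + \BSig_2$.

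\emph{Step 1: obtaining $P$.} Since $\M \models \RCA_0 + \BSig_2$, the structure $\M[Y']$ satisfies $\RCA_0^*$, as noted before \Cref{prop:wkl-formalized-low}. By \Cref{lem:wkl-scott-ideal} applied in $\M[Y']$ to $X = Y'$, there is a $\Delta_1(Y')$-definable infinite binary tree $T$ all of whose paths code a Scott ideal containing $Y'$. This is the formal meaning of $P \gg Y'$: $P$ is of PA degree relative to $Y'$, with effective closure. \Cref{prop:rca-wkl-star}, applied to $\M[Y'] \models \RCA_0^*$ and $T$, yields a path $P$ with $\M[Y'][P] \models \RCA_0^*$. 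Because $Y' \leq_T P$, we have $\M[Y'][P] = \M[P]$, so $\M[P] \models \RCA_0^*$. The only point to check here is that the notion $\gg$ used in \Cref{prop:ovw-step} matches this construction. If instead $\gg$ means ``computes a path through every infinite $Y'$-computable binary tree,'' it follows from the Scott-ideal coding: the set of $Q$-indices gives this uniformly.

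\emph{Step 2: applying the proposition.} Apply \Cref{prop:ovw-step} to $\M$, $Y$, $A$, $\ell$, $f$ and the $P$ from Step 1. This gives an ordered $M$-variable sequence $W$ with $\WSub^*(W)$ $f$-monochromatic, which is (1), and with $(W \oplus Y)' \leq_T P$.

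\emph{Step 3: verifying (2).} The structure $\M[W]$ is $\bbomega$-extended from $\M$ and has second-order part the $\Delta_1(S, W)$-definable sets. Since $\M$ is topped by $Y$, these are exactly the $\Delta_1(W \oplus Y)$-definable sets, so $\M[W]$ is countable and topped by $W \oplus Y$. Now $(W \oplus Y)' \leq_T P$ and $\M[P] \models \RCA_0^*$, so \Cref{lem:jump-bsig2-bsig1} applies and gives $\M[W] \models \RCA_0 + \BSig_2$.

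The only potential obstacle is the bookkeeping in Step 1: confirming that the $P$ provided by Simpson and Smith relative to $Y'$ both satisfies the hypothesis $P \gg Y'$ and keeps $\M[P] \models \RCA_0^*$. This uses that relativizing to $Y'$ is legitimate because $\BSig_2$ holds in $\M$.
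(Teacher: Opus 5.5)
Your proof is correct and takes essentially the same approach as the paper. The paper likewise applies Simpson--Smith (\Cref{prop:rca-wkl-star}) in the $\RCA_0^*$-model of $\Delta_2(Y)$-definable sets (your $\M[Y']$) to a tree whose paths are all $\gg Y'$, feeds the resulting $P$ into \Cref{prop:ovw-step}, and concludes with \Cref{lem:jump-bsig2-bsig1}. Your explicit construction of that tree via \Cref{lem:wkl-scott-ideal} is a harmless way of filling in a detail the paper leaves implicit.
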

\begin{proof}
Let $\M' = (M, \Def{\Delta_2}(Y,M))$.
Since $\M \models \RCA_0 + \BSig_2$, then $\M' \models \RCA_0^*$.
Let $T \subseteq 2^{<M}$ be a $\Delta_2(Y)$-definable infinite tree such that for every path~$P \in [T]$, $P \gg Y'$. In particular, $T \in \M'$, so by \Cref{prop:rca-wkl-star}, there is a set~$P \subseteq M$ with $P \gg Y'$ such that $\M'[P] \models \RCA_0^*$.
By \Cref{prop:ovw-step}, there is an ordered $M$-variable sequence~$W$ over~$A$ such that  $\WSub^*(W)$ is $f$-monochromatic and $P \geq_T (W \oplus Y)'$. Since $\M'[P] \models \RCA_0^*$, then by \Cref{lem:jump-bsig2-bsig1}, $\M[W] \models \RCA_0 + \BSig_2$.
\end{proof}

Iterating \Cref{cor:ovw-step-improved}, one obtains the following $\bbomega$-extension theorem:

\begin{proposition}\label[proposition]{prop:ovw-omega-extension}
For every countable topped model $\M = (M,S) \models \RCA_0 + \BSig_2$,
there is an $\bbomega$-extension~$\M_\infty = (M, S_\infty) \models \RCA_0 + \OVW^0$.
\end{proposition}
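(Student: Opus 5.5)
We build an increasing chain of countable topped models $\M = \M_0 \subseteq \M_1 \subseteq \cdots$, all $\bbomega$-extensions of $\M$ satisfying $\RCA_0 + \BSig_2$, and take $S_\infty = \bigcup_n S_n$. Since $M$ and each $S_n$ are countable, we fix in advance a bookkeeping enumeration of all pairs (stage, instance) so that every instance of $\OVW^0$ appearing in some $S_n$ is eventually handled. The chain itself is indexed by the standard natural numbers, not by $M$. An instance here is an $M$-bounded alphabet $A$, some $\ell \in M$, and a coloring $f : A^{<M} \to \ell$ in $S_n$.

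At stage $n$, let $\M_n$ be topped by $Y_n$, and let $(A,\ell,f)$ be the instance scheduled for this stage. We apply \Cref{cor:ovw-step-improved} to $\M_n$ and obtain an ordered $M$-variable sequence $W$ with two properties: $\WSub^*(W)$ is $f$-monochromatic, and $\M_n[W] \models \RCA_0 + \BSig_2$. We set $\M_{n+1} = \M_n[W]$, which is topped by $Y_n \oplus W$ and is countable, so the induction continues.

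Next we check that $\M_\infty \models \RCA_0 + \OVW^0$. All first-order parts equal $M$, so every first-order axiom ($\PAm$) transfers, and $\RCA_0$ follows. Each instance of $\GammaCA{\Delta^0_1}$ or $\ISig_1$ in $\M_\infty$ mentions only finitely many set parameters. These all lie in some $S_n$, and since $\M_n$ is topped, a set $\Delta_1$-definable from parameters in $S_n$ lies in $S_n$. So comprehension and $\ISig_1$ in $\M_\infty$ reduce to their validity in $\M_n$, using that the arithmetic truth of formulas with parameters in $S_n$ is the same in both structures.

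For $\OVW^0$, take an instance $f \in S_n$. It was handled at some stage $m \geq n$, which produced $W \in S_{m+1} \subseteq S_\infty$. The only remaining point is to reconcile the conclusion of $\OVW^0$, that $\{W[u] : u \in A^{<M}\}$ is monochromatic, with $\WSub^*(W)$. For an $M$-variable sequence $W$ of length $M$, every $W[u]$ is cut before the first occurrence of $x_{|u|}$ and hence has length less than $|W| = M$. So $\WSub(W) = \WSub^*(W)$, and the monochromaticity from \Cref{cor:ovw-step-improved} is exactly the required one.

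The main subtlety I expect is justifying why $\M_\infty$ satisfies the axioms even though it is not itself topped. This is handled by the observation above that each axiom instance involves finitely many parameters, all appearing at a finite stage. The other delicate point is ensuring the bookkeeping covers all instances. Countability of each $S_n$ and a standard dovetailing over $\omega \times \omega$ handle this.
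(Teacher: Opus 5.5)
Your proposal is correct and follows the paper's proof: iterate \Cref{cor:ovw-step-improved} along an $\omega$-indexed chain of countable topped $\bbomega$-extensions, use dovetailed bookkeeping, and take the union; your explicit check that the union satisfies $\RCA_0$ (each axiom instance has finitely many set parameters, all in some $S_n$) is a detail the paper leaves implicit. One small notational slip: $\WSub(W)=\OSub^0(W)$ also contains the infinite words $W[u]$ for $u$ of length $M$, so $\WSub(W)\neq\WSub^*(W)$ in general, but the identity you actually use, $\WSub^*(W)=\{W[u]:u\in A^{<M}\}$, is true.
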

\begin{proof}
Using \Cref{cor:ovw-step-improved}, construct a countable sequence of structures $\M = \M_0, \M_1, \M_2, \dots$ such that for every $n \in \omega$,
\begin{itemize}
    \item[(1)] $\M_{n+1} = (M, S_{n+1})$ is a topped model $\bbomega$-extending $\M_n$;
    \item[(2)] $\M_{n+1} \models \RCA_0 + \BSig_2$;
    \item[(3)] For every $M$-bounded alphabet~$A$, every $\ell \in M$ and every coloring $f : A^{<M} \to \ell$ in~$S_n$, there is some~$k \in \omega$ and some ordered $M$-variable sequence~$W$ over~$A$ such that $\WSub^*(W)$ is $f$-monochromatic and $W \in S_k$.
\end{itemize}
Then $\M_\infty = (M, \bigcup_n S_n)$ is an $\bbomega$-extension of~$\M$ such that $\M_\infty \models \RCA_0 + \BSig_2 + \OVW^0$.
\end{proof}

We can now prove the main theorem:

\begin{repmaintheorem}{thm:main1}
$\RCA_0 + \OVW^0$ is $\Pi^1_1$-conservative over $\RCA_0 + \BSig_2$.
\end{repmaintheorem}
\begin{proof}
Suppose that $\RCA_0 + \BSig_2 \nvdash \forall X \varphi(X)$ for some arithmetic formula~$\varphi$. By the completeness theorem for Henkin models and the downward Löwenheim–Sko\-lem theorem, there is a countable model $\M_0 = (M, S_0) \models \RCA_0 + \BSig_2 + \exists X \neg \varphi(X)$. Let $X \in S_0$ be such that $\M_0 \models \neg \varphi(X)$ and $\M_1 = (M, \Def{\Delta^0_1}(X, M))$. Then $\M_1$ is a countable model $\M_1 \models \RCA_0 \wedge \BSig_2 \wedge \neg \varphi(X)$ which is topped by~$X$.
By \Cref{prop:ovw-omega-extension}, there is an $\bbomega$-extension $\M_2 = (M, S_2)$ of~$\M_1$
such that $\M_2 \models \RCA_0 + \OVW^0$. Since $S_2 \supseteq S_1$, then $X \in S_2$, and since $\varphi$ is arithmetic, $\M_2 \models \neg \varphi(X)$, so $\RCA_0 + \OVW^0 \nvdash \forall X \varphi(X)$.
\end{proof}

We now turn to the proof of \Cref{prop:ovw-step}.
In what follows, fix a countable model $\M = (M, S)$ topped by a set $Y \in S$, fix an $M$-bounded alphabet $A$, some number of colors $\ell \in M$, a set $P \gg Y'$ and a coloring $f : A^{<M} \to \ell$ in~$S$ as in the hypothesis of \Cref{prop:ovw-step}.

As in~\cite{liu2026reverse}, we will design a notion of forcing such that any sufficiently generic filter produces an infinite, finitely branching tree~$T$. Every infinite path through~$T$ is an ordered $M$-variable word~$W$ over~$A$ which is $f$-monochromatic \emph{at infinitely many levels},
that is, such that for infinitely many $n\in M$, $W[A^n] = \{ W[u] : u \in A^n \}$ is $f$-monochromatic. Then, an application of the infinite pigeonhole principle (which holds over~$\RCA_0 + \BSig_2$) yields an $\OVW^0$-solution.

Then, we will construct a $Y'$-computable 1-generic decreasing sequence of forcing conditions, so that every path through the generic tree is GL${}_1$. The construction being internal, the main difficulty is to simultaneously decide the jump on initial segments of the generic set, using the technique of Shore blocking.

The notion of forcing being Mathias-like, with infinite reservoirs, we first fix a Scott ideal to which belong the reservoirs. By \Cref{prop:wkl-formalized-low} and \Cref{lem:wkl-scott-ideal}, there is a set~$Q = \bigoplus_{a \in M} X_a$ such that $(Q \oplus Y)' \leq_T Y'$ and $Q$ codes the Scott ideal $\{ X_a : a \in M \}$.

\subsection{Notion of forcing}\label[section]{sec:forcing-notion}

The following notion of $\PP$-branch is a coded representation of the corresponding notion in~\cite{liu2026reverse}. 

\begin{definition}\label[definition]{caOVW-def-precondition}
A \emph{$\PP$-branch} is a pair $(v,a)$ where $v$ is an $M$-coded ordered variable word and
$X_a$ an ordered $M$-variable word.
\end{definition}

Its \emph{denotation} $[v, a]$ is defined as 
\begin{align}\label{caOVW-eq14}
[v, a] = \big\{
G\in \OSub(v\cdot X_a) :\ & v\preceq G
\big\}
\end{align}
Note that $G$ might be of any length between $|v|$ and $M$.
We also implicitly require that the variable kinds in $G\uhr_{|v|}^{|G|-1}$ and $v$ are disjoint.

\begin{definition}[$\PP$-branch extension]\label[definition]{caOVW-def-preconditionextension}
A $\PP$-branch\ $(v, b)$ \emph{extends}
a $\PP$-branch\ $(u,a)$
(written $(v, b)\leq (u,a)$)
iff
$v \succeq u$
and $v\cdot X_b \in \OSub^M(u\cdot X_a)$.
% Or equivalently, for some ordered $M$-variable word
%  $U\in \OSub^M(X_a)$ and some $t\in\omega$,
%  $v = u\cdot (U\uh t)$ and $X_b = U\uhr_t^\infty$
%  where $t$ must be in between variables of $U$.
\end{definition}

As noted in~\cite{liu2026reverse}, if $(v, b)\leq (u,a)$, we do not have in general $v \in \OSub^*(u\cdot X_a)$. One easily checks that $\PP$-branch extension is  transitive, and that if $(v, b) \leq (u, a)$, then $[v, b] \subseteq [u, a]$. Note that the extension relation is $\Pi_1(Q)$: $W' \in \OSub^M(W)$ is $\Pi_1(W \oplus W')$.

\begin{definition}[Forcing condition]\label[definition]{caOVW-def-condition}
 A \emph{$\PP$-precondition}
is a pair $(I,a)$ where
\begin{itemize}
\item $I$ is an $M$-coded set of ordered variable word
that is length-homogeneous. i.e., For some $t\in M$ (denoted as $t(I)$)
 $|v| = t$ for all $v\in I$;
\item For every $v\in I$, $(v,a)$
is a $\PP$-branch.
\end{itemize}
A $\PP$-precondition $(I, a)$ is \emph{$f$-matching} if for every word $w\in \WSub^*(X_a)$,
there exists $v\in I$
such that $\WSub^=(v\cdot w)$ is $f$-monochromatic.
A \emph{$\PP$-condition} is an $f$-matching $\PP$-precondition.
\end{definition}

The \emph{denotation} of a $\PP$-precondition $(I, a)$ is $[I,a] = \cup_{v\in I} [v,a]$.
As explained in~\cite{liu2026reverse}, the denotation of a $\PP$-branch or of a $\PP$-precondition may not contain any ordered $M$-variable word which is $f$-monochromatic at infinitely many levels, but the $f$-matching property ensures that a $\PP$-condition contains sufficiently many stems for it to be the case (see \cite[Lemma 3.12]{liu2026reverse}).

\begin{definition}[$\PP$-precondition extension]
A $\PP$-precondition $(J,b)$ \emph{extends} $\PP$-precondition
$(I,a)$ iff for every $v \in J$,
there exists an $u\in I$ such that
$(v,b)\leq (u,a)$.
\end{definition}

Transitivity of $\PP$-precondition extension is inherited from that of $\PP$-branch extension,
and if $(J, b) \leq (I, a)$, then $[J, b] \subseteq [I, a]$. Last, it is useful to give an explicit syntactical definition of forcing relation for $\Sigma^0_1$ and $\Pi^0_1$-formulas that satisfies \qt{forcing implies truth} property for every (i.e. even non-generic) filters. 

Every $\Sigma^0_1$ formula $\varphi(G)$ can be expressed in normal form as $\exists k \psi(G \uh_k)$, where $\psi(\sigma)$ is a monotonous $\Delta^0_0$ formula, that is, if $\psi(\sigma)$ and $\sigma \preceq \tau$, then $\psi(\tau)$ holds. We shall always assume that the $\Sigma^0_1$-formulas are in normal form.

\begin{definition}[Forcing relation]\label[definition]{caOVW-def-forcing}
Let $(v,a)$ be a $\PP$-branch and $\varphi(G):= \exists n\psi(G\uhr n)$ be a $\Sigma^0_1$-formula. Let
\begin{itemize}
    \item[(1)] $(v,a)\Vdash \varphi(G)$
    iff $\psi(v\uhr n)$ holds for some $n$;
    \item[(2)] $(v,a)\Vdash \neg\varphi(G)$ iff
    for every finite $\h v\in [v,a]$, every $n\in M$, $\neg\psi(\h v\uhr n)$ holds.
\end{itemize}
\end{definition}

We say that a $\PP$-branch $(v, a)$ \emph{decides} a $\Sigma^0_1$-formula $\varphi(G)$ if either $(v,a)\Vdash\varphi(G)$ or $(v,a)\Vdash\neg\varphi(G)$.
Accordingly, a $\PP$-precondition $(I,a)$ \emph{decides} a $\Sigma^0_1$-formula if each of its $\PP$-branches decides it.

Assuming the $\Sigma^0_1$-formula $\varphi(G)$ is in Kleene's normal form, if $(u, a) \Vdash \varphi(G)$ and $(v, b) \leq (u, a)$, then $(v, b) \Vdash \varphi(G)$. Moreover, $\varphi(G)$ holds for every $G \in [u, a]$.
The same holds for $\neg \varphi(G)$.

\subsection{Graham-Rothschild tree}\label[section]{sect:graham-rothschild-tree}

The notion of Graham-Rothschild tree is a fundamental notion in the combinatorics of the Ordered Variable Word theorem. Intuitively, it is an infinite, finitely branching tree whose infinite paths are ordered $\bbomega$-variable words and such that every level contains \qt{sufficiently many} stems so that the corresponding $\PP$-precondition is a $\PP$-condition.

It is based on infinite iterations of the Hales-Jewett theorem~\cite{hales1963regularity}. This finitary theorem is the bootstrap of a large part of Ramsey theory. The original proof used double induction and yielded a bound $HJ$ growing as fast as the Ackermann function. Shelah~\cite{shelah1988primitive} gave a new proof yielding primitive recursive bounds:

\begin{theorem}[Hales-Jewett, $\RCA_0$]\label[theorem]{thm:hales-jewett}
There exists a primitive recursive function~$HJ(k, \ell)$ such that for every $n \geq HJ(k, \ell)$, every alphabet~$A$ of size~$k$ and every coloring $f : A^n \to \ell$, there exists some 1-variable word $v$ over~$A$ of length~$n$ such that $\WSub^{=}(v) = \{ v[a] : a \in A \}$ is $f$-homogeneous.
\end{theorem}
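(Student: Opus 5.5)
The plan is to formalize Shelah's proof~\cite{shelah1988primitive} inside $\RCA_0$. The point is that every object involved is finite and coded, and every bound is given by an explicitly defined primitive recursive function. Since $\RCA_0 \vdash \ISig_1$, every primitive recursive function is provably total, so $HJ$ can be defined by a primitive recursive recursion on~$k$. The combinatorial statement about $HJ$ is then proved by $\Pi^0_1$-induction on~$k$, which is available because $\IPi_1 \equiv \ISig_1$.

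First I reduce the statement to lengths exactly $n = HJ(k,\ell)$. If $n' \geq n$ and $f : A^{n'} \to \ell$, fix the last $n'-n$ letters to some $a \in A$ and color $A^n$ by $u \mapsto f(u \cdot a^{n'-n})$. A homogeneous 1-variable word $v$ of length $n$ then yields the homogeneous 1-variable word $v \cdot a^{n'-n}$ of length $n'$. Next I fix $\ell$ and let $\theta(k)$ be the formula
\[
\forall n \geq HJ(k,\ell)\ \forall A\,(|A|=k)\ \forall f : A^n \to \ell\ \exists v \in A^{n,1}_{<} \text{ with } \WSub^=(v) \text{ $f$-homogeneous}.
\]
After the first two universal quantifiers, all quantifiers are bounded by primitive recursive functions of $n$, $k$ and~$\ell$. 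Hence $\theta(k)$ is $\Pi^0_1$ with the primitive recursive graph of $HJ$ as a parameter. The base case $k=1$ is trivial with $HJ(1,\ell)=1$, taking $v = x_0$.

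For the inductive step from $k$ to $k+1$, set $n = HJ(k,\ell)$, write $A = \{0,\dots,k\}$, and define the block lengths primitive recursively from the top down. Let $m_n = 1$, and let $m_i$ be large enough that the pigeonhole argument below applies to colorings with $c_i = \ell^{(k+1)^{M_i}}$ colors, where $M_i$ is the total length of all the other blocks. Then set $HJ(k+1,\ell) = \sum_i m_i$. The key step is Shelah's cube lemma. For a coloring $f$ of $A^{m_1+\cdots+m_n}$, one finds in each block $i$ a word of the form $k^{s_i}\, x\, (k-1)^{t_i}$, i.e.\ a $1$-variable word in which $x$ is then restricted to values in $\{k-1, k\}$. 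These words are chosen so that $f$ does not distinguish substituting $k-1$ or $k$ in any single block, whatever the other blocks contain.

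Block $i$ is handled by applying the finite pigeonhole principle to the $m_i+1$ words $k^{j}(k-1)^{m_i-j}$, coloring each one by the induced coloring on all fillings of the other blocks. Two of these words receive the same induced color, and they give the required $s_i, t_i$. The blocks are treated in an order for which the ``other blocks'' have already been fixed to the relevant finite sets of words, so the number $c_i$ of induced colors is finite and primitive recursively bounded. Finally, a word $u$ over $\{0,\dots,k-1\}$ of length $n$ defines a coloring $g(u) = f(w_1[u(1)]\cdots w_n[u(n)])$. The induction hypothesis $\theta(k)$ gives a $g$-homogeneous $1$-variable word. Substituting the block words into it yields a $1$-variable word of length $\sum_i m_i$ that is $f$-homogeneous, including for the letter~$k$ by the cube lemma.

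I expect the main obstacle to be the bookkeeping in the cube lemma. One has to make the recursion defining $(m_i)$ and $HJ$ explicitly primitive recursive, which is the content of Shelah's estimate. One also has to check that each finite pigeonhole application only involves coded finite objects of primitive recursive size. That makes the lemma itself a $\Delta^0_0$-statement in the parameters, provable by bounded reasoning and finite pigeonhole in $\RCA_0$, so no induction beyond $\ISig_1$ is used.
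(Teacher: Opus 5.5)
The paper gives no proof of this statement. It cites Shelah's primitive recursive argument and asserts that it formalizes in $\RCA_0$, and your architecture is that argument: $HJ$ defined by primitive recursion on $k$, $\Pi^0_1$-induction on $k$ with $\ell$ fixed, a block insensitivity lemma proved by finite pigeonhole, and the induction hypothesis applied to the induced $\ell$-coloring $g$. However, the block bookkeeping, which you rightly call the crux, fails as written in three places.

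\emph{The variable must occur several times.} Pigeonhole among $z_s = k^s(k-1)^{m_i-s}$, $s \le m_i$, only gives some $s<s'$ with the same induced color, not $s'=s+1$. For instance, a coloring depending only on the parity of the number of $k$'s in block $i$ makes consecutive $z_s$ always differ. So the block word must be $w_i = k^{s}x^{s'-s}(k-1)^{m_i-s'}$, with $w_i[k-1]=z_s$ and $w_i[k]=z_{s'}$, not a word with a single occurrence of $x$.

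\emph{The color count $c_i=\ell^{(k+1)^{M_i}}$ fails under either reading of $M_i$.} If $M_i$ is the total length of all other blocks, the requirement $m_i \ge c_i$ cannot be met once $n,\ell \ge 2$: taking $m_{i_0}$ maximal, any $i_1 \neq i_0$ would need $m_{i_1} \ge \ell^{(k+1)^{m_{i_0}}} > m_{i_0}$. If instead $M_i = m_{i+1}+\cdots+m_n$, the blocks $j<i$, which also vary, are not counted. The recursion becomes consistent once you use that already-treated blocks contribute only $k+1$ fillings each. Treat the blocks in the order $1,\dots,n$. At stage $i$, the blocks $j<i$ range over the $k+1$ words $w_j[a]$, and the blocks $j>i$ range over all of $\{0,\dots,k\}^{m_j}$, which covers their eventual values. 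There are then at most $c_i=\ell^{(k+1)^{(i-1)+m_{i+1}+\cdots+m_n}}$ induced colors, and $m_i=c_i$ works, defined from $m_n$ downward as you intended.

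\emph{The last block cannot have length $1$.} In particular $m_n=\ell^{(k+1)^{n-1}}$, not $1$. With $m_n=1$ the only candidates in the last block are the letters $k$ and $k-1$, and the coloring ``the last letter is $k$'' separates them for every filling of the other blocks.

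With these corrections, the rest of your plan goes through. The map $g(u)=f(w_1[u(1)]\cdots w_n[u(n)])$, now for $u\in\{0,\dots,k\}^n$, is unchanged by switching any single coordinate between $k-1$ and $k$, whatever the other coordinates are. A bounded induction along the variable positions of the $g$-homogeneous word $v$ given by $\theta(k)$ then yields $g(v[k])=g(v[k-1])$. The sequence $(m_{n-j})_{j<n}$ is a primitive recursion in $j$ with parameters $n,k,\ell$, so $HJ(k+1,\ell)=\sum_i m_i$ is primitive recursive in $(k,\ell,HJ(k,\ell))$. The choice of $(w_i)_{i\le n}$ is justified by induction on $i$ of a bounded statement, using the finite pigeonhole principle for coded maps, all available in $\RCA_0$.
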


For simplicity, given a finite alphabet~$A$ and some~$\ell \geq 1$, we might simply write $HJ(A,\ell)$ for $HJ(\card A, \ell)$.
Let $(\ell_n)_{n \in \NN}$ be inductively defined by
$$\ell_0 = \ell \mbox{ and } \ell_{n+1} = \ell_n \times \card A^{HJ(A, \ell_n),1}_<$$
Here, $\card A^{HJ(A, \ell_n),1}_<$ is the number of ordered 1-variable words of length $HJ(A, \ell_n)$ over~$A$. Then, let $(t_n)_{n \in \NN}$ be inductively defined by
$$t_0 = 0 \mbox{ and } t_{n+1} = t_n + HJ(A, \ell_n)$$

The \emph{Graham-Rothschild tree for $\ell$-colorings over alphabet~$A$} is the tree $\T_{A,\ell}$ whose level~$n$ consists of all ordered $n$-variable words~$w$ of length~$t_n$, such that for every~$s < n$, the variable~$x_s$ appears only within the positions $[t_s, t_{s+1})$. This tree is partially ordered by the prefix relation. The following property was proven by Liu and Patey~\cite{liu2026reverse} over~$\RCA_0$.

\begin{lemma}[Liu and Patey~\cite{liu2026reverse}, $\RCA_0$]\label[lemma]{lem:graham-rothschild-tree}
Fix~$A$ and $\ell \geq 1$.
Let $f : A^{<\bbomega} \to \ell$ be a coloring.
For every~$u \in A^{<\bbomega}$ and $n \in \NN$, there is some ordered $n$-variable word~$w$ at level~$n$ in~$\T_{A,\ell}$ such that $\WSub^{=}(w \cdot u)$ is $f$-monochromatic.
\end{lemma}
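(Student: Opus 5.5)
The proof goes by induction on~$n$, building the ordered variable word block by block from the front. The key observation is that $\T_{A,\ell}$ is self-similar. Let $\ell' = \ell_1$ and let $(\ell'_s)_s$ and $(t'_s)_s$ be the sequences attached to~$\ell'$. Then $\ell'_s = \ell_{s+1}$ and $t'_s = t_{s+1} - t_1$. Hence the words at level~$n+1$ of~$\T_{A,\ell}$ are exactly the concatenations $v \cdot w'$, where $v$ is an ordered $1$-variable word of length $HJ(A,\ell)$ with variable $x_0$, and $w'$ is at level~$n$ of~$\T_{A,\ell_1}$ with its variables renamed $x_1, \dots, x_n$.

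For the inductive step, fix $f$ and $u$. Define a derived coloring $g : A^{<\bbomega} \to \ell_1$ as follows. For $y \in A^{<\bbomega}$, let $g(y)$ be the pair $(v, c)$, where $v$ is the least (in some fixed order) ordered $1$-variable word of length $HJ(A,\ell)$ such that $\{ v[a] \cdot y \cdot u : a \in A \}$ is $f$-monochromatic, and $c$ is that common color. Such a $v$ exists by \Cref{thm:hales-jewett} applied to $z \mapsto f(z \cdot y \cdot u)$ on $A^{HJ(A,\ell)}$. The number of possible values is $\ell \times \card A^{HJ(A,\ell),1}_< = \ell_1$, and $g$ is $\Delta_1(f)$ since the search is bounded.

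Now apply the statement at level~$n$ to $g$, with $\ell_1$ colors and the empty tail. This gives $w'$ at level~$n$ of $\T_{A,\ell_1}$ with $g$ constant, equal to some $(v,c)$, on $\WSub^=(w')$. Set $w = v \cdot w'$, which lies at level $n+1$ of $\T_{A,\ell}$. Every element of $\WSub^=(w\cdot u)$ has the form $v[a]\cdot w'[s]\cdot u$ with $g(w'[s]) = (v,c)$, so its $f$-color is~$c$. The base case $n=0$ is trivial: $w$ is empty and $\WSub^=(u) = \{u\}$.

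The main obstacle is formalizing this over $\RCA_0$. As stated, the induction quantifies over all colorings, which is a second-order statement, so I would not induct on that directly. Instead, fix $f$ and $u$ and build uniformly in~$k$ the iterated derived colorings. Set $g_0 = f$ with tail $u_0 = u$, and let $g_{k+1}$ be the coloring derived from $g_k$ with tail $u_k$ as above, with $u_{k+1}$ empty and $g_{k+1}$ into $\ell_{k+1}$ colors. The sequence $(g_k)_k$ is then $\Delta_1(f)$-definable as a single set, using primitive recursiveness of $HJ$ and of $(\ell_k)$.

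Then prove by $\Sigma^0_1$-induction on $j \le n$ the following claim: there is $w$ at level~$j$ of $\T_{A,\ell_{n-j}}$ such that $\WSub^=(w\cdot u_{n-j})$ is $g_{n-j}$-monochromatic. This is in fact a bounded search, hence $\Delta_0(f)$. The step from $j$ to $j+1$ is exactly the inductive step above, carried out with $g_{n-j-1}$ in place of~$f$. Taking $j = n$ yields the lemma.
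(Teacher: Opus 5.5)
Your proof is correct and is the intended argument. The paper does not reprove this lemma (it imports it from Liu and Patey), but the recurrences $\ell_{n+1} = \ell_n \times \card A^{HJ(A,\ell_n),1}_<$ and $t_{n+1} = t_n + HJ(A,\ell_n)$ are exactly the count of your derived colors $(v,c)$ and the self-similarity $t'_s = t_{s+1}-t_1$ that your inductive step relies on.

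The one point to tighten is the formalization. Since $g_{k+1}(y)$ queries $g_k$ on longer words, the cleanest way to get $(g_k)_k$ as a single $\Delta_1(f)$ set is to fix a target and build the finite tables of $g_m$ on $A^{\le \lambda_m}$ by $\Sigma^0_1$-induction on $m$, with Hales--Jewett guaranteeing that each bounded search succeeds. Alternatively, you can skip the sequence entirely: the conclusion depends only on $f$ restricted to words of length $t_n+|u|$, so you can prove the resulting $\Pi^0_1$ statement about all finite colorings directly by induction on $n$.
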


For the purpose of Shore blocking, we shall be interested in products of GR-trees:
Given $d \in M$, let $\T^d_{A,\ell}$ be the tree such that the nodes at level~$n$ are exactly the $d$-tuples of nodes at level~$n$ in $\T^d_{A,\ell}$. Any infinite path through $\T^d_{A,\ell}$ is a $d$-tuple $\vec{V} = (V_0, \dots, V_{d-1})$ of ordered $M$-variable words over~$A$. Given an ordered variable word~$u$, we write $\vec{V}[u]$ for 
$$
V_0[V_1[\cdots V_{d-1}[u] \cdots ]] = V_0[V_1] \cdots [V_{d-1}][u]
$$

\subsection{Deciding formulas simultaneously}

We now turn to the heart of the formalized weakly low basis theorem, which is proving the density of the set of conditions deciding a bounded quantity of formulas simultaneously. Iteratively applying the forcing question of Liu and Patey~\cite{liu2026reverse} would require too much induction strength. One therefore has to develop some new combinatorics.
The following forcing question is essentially \cite[Definition 3.16]{liu2026reverse} reformulated to emphasize the fact that it is a question within a branch rather than a precondition:

Recall that $\T_{A, k}$ is the Graham-Rotshchild tree
for $k$-colorings over alphabet~$A$ (see \Cref{sect:graham-rothschild-tree}).

\begin{definition}[Forcing question]\label[definition]{caOVW-def-forcingquestion}
Let $(v,a)$ be a $\PP$-branch and $k \in M$.
We write $(v, a)\qvdash_k\varphi(G)$
iff there exists a level~$n \in M$ such that for every $u$ at level~$n$ in~$\T_{A,k}$,
 there exists some $\hat u\in \OSub^=(u)$ such that
$\varphi(v\cdot X_a[\hat u])$ holds.
\end{definition}

The following $\Sigma^0_1$-extension lemma is an immediate adaptation of \cite[Lemma 3.19]{liu2026reverse}. The main difference is that one allows the formulas to depend on the stem~$v$. Note that this difference is only presentational since one can define $\varphi(G) := \varphi_{G \uh t}(G)$ where~$t$ is the common length of all elements in~$J$.

\begin{lemma}[$\Sigma^0_1$-extension]\label[lemma]{lem:sigma01-extension}
Let $(I, a)$ be a $\PP$-condition, $J \subseteq I$ and $(\varphi_v)_{v \in J}$ be uniformly $\Sigma_1(Y)$-formulas such that $(v, a) \qvdash_{\ell \cdot \card I} \varphi_v(G)$ for every~$v \in J$. Then there is a $\PP$-condition $(K, b) \leq (I, a)$ such that $(\h v, b) \Vdash \varphi_v(G)$ (i.e. $\varphi_v(\h v)$ holds) for every $v \in J$ and every $\hat v \in K$ extending~$v$.  

Moreover, $(K, b)$ can be computed from $(I, a)$, $J$ and $(\varphi_v)_{v \in J}$ using a $\Delta^0_1(Y')$ function. 
\end{lemma}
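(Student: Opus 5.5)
We will build $(K,b)$ by handling the stems in $J$ in one combined step through a product Graham–Rothschild tree, instead of one stem at a time. Put $d = \card J$, enumerate $J = \{v_0, \dots, v_{d-1}\}$, and let $k = \ell \cdot \card I$. For each $i<d$, the hypothesis $(v_i,a) \qvdash_k \varphi_{v_i}(G)$ supplies a level $n_i$. This is a $\Sigma_1(Y \oplus Q)$ fact. Since $(Q\oplus Y)' \leq_T Y'$, the finite sequence $(n_i)_{i<d}$ can be found $Y'$-computably by bounded search, and $\BSig_2$ bounds it. We may take all $n_i$ equal to some $n$, because the forcing question is upward closed in the level: every node at a higher level of $\T_{A,k}$ restricts to a node at level $n_i$, with variables grouped in the blocks $[t_s,t_{s+1})$, and $\OSub^=$ composes. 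The use is also finite, so one number $N$ bounds the length of $X_a$ that all witnesses look at.

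Next we turn this into a single new reservoir. The plan follows \cite[Lemma 3.19]{liu2026reverse}. We choose an ordered $n$-variable word $u$ at level $n$ of $\T_{A,k}$, which will be a common prefix of the new reservoir, and set $X_b := X_a[u \cdot x_n x_{n+1}\cdots]$, re-indexed so that the tail keeps only the unused variables. For each $i$, the chosen $u$ comes with some $\hat u_i \in \OSub^=(u)$ such that $\varphi_{v_i}(v_i \cdot X_a[\hat u_i])$ holds. The new stems are
\[
K = \{\, v \cdot X_a[\hat u] : v \in I,\ \hat u \in \OSub^{=}(u),\ \WSub^=(v\cdot X_a[\hat u]) \text{ compatible with } f\,\},
\]
trimmed to length $t(I)+|X_a[u]|$. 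This set is $M$-coded and length-homogeneous, and $(\hat v,b)\le (v,a)$ because $\hat v \cdot X_b \in \OSub^M(v\cdot X_a)$. Every $\hat v\in K$ that extends $v_i$ must contain the forced witness. To arrange this, we do not take all of $\OSub^=(u)$ above $v_i$. Above $v_i$ we keep only stems of the form $v_i\cdot X_a[\hat u_i]$, possibly with further substitutions inside $X_b$. This is harmless since the forced formula is $\Sigma^0_1$ and monotone.

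The main obstacle is the $f$-matching property of $(K,b)$: every $w \in \WSub^*(X_b)$ needs a stem $\hat v\in K$ with $\WSub^=(\hat v\cdot w)$ monochromatic. Here we use the choice $k=\ell\cdot\card I$. Colour each node $u'$ at level $n$ of $\T_{A,k}$, together with a tail word $w$, by the pair (the $v\in I$ matching $X_a[u']\cdot w$ in $(I,a)$, its colour). This gives at most $\ell\cdot\card I$ colours. \Cref{lem:graham-rothschild-tree} applied to this colouring gives a node at level $n$ whose whole $\WSub^=$ is monochromatic for the derived colouring. The subtle point is that $u$ must be chosen before $w$ ranges over all of $\WSub^*(X_b)$. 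We handle this as in \cite{liu2026reverse}: we keep all of $\OSub^=(u)$ in the stems, so that for each $w$ some sub-node $\hat u$ witnesses matching, and we restrict to $\hat u_i$ only above the $v_i \in J$. Checking that this restriction does not destroy matching is the delicate step. We will attempt it by pre-homogenising, that is, by choosing $u$ inside a level where the forcing-question witnesses are already guaranteed for every node.

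For the effectivity claim, each step is a bounded search over $M$-coded finite objects using $Q\oplus Y$-computable data: the levels, the node $u$, and the index $b$, obtained through the Scott-ideal operations. The only oracle queries are $\Sigma_1(Q\oplus Y)$ facts, so $(K,b)$ is $\Delta^0_1(Y')$ in $(I,a)$, $J$ and $(\varphi_v)_{v\in J}$.
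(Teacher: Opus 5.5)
Your first step, which obtains a common level $n$ for all $v\in J$ from collection and the monotonicity of $\qvdash$ in the level, matches the paper. The construction of $(K,b)$, however, has a genuine gap, exactly where you flag it.

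You fix one node $u$ at level $n$ of $\T_{A,\ell\cdot\card I}$ during the construction, and above each $v_i\in J$ you keep only the stem $v_i\cdot X_a[\hat u_i]$. This loses $f$-matching. For a given $w\in\WSub^*(X_b)$, the stem of $(I,a)$ that matches $X_a[z]\cdot w$ may be some $v_i\in J$, and nothing makes $\WSub^=(v_i\cdot X_a[\hat u_i]\cdot w)$ monochromatic. For instance, if $J=I$ and every $\hat u_i$ is variable-free, then $K$ consists of $\card I$ fixed stems, and matching would require one of them to work for every $w$.

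The $(\ell\cdot\card I)$-coloring you introduce depends on $w$, and there are $M$-unboundedly many $w$, so no single $u$ can be homogeneous for all of them. ``Pre-homogenising'' does not help. The forcing question already supplies witnesses at every node of level $n$; the real issue is which node to use, and that depends on $w$.

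The missing idea is that no node is chosen in the construction at all. The node $u$ is chosen only in the verification, after $w$ is fixed. The clause ``for every $u$ at level $n$'' in $(v,a)\qvdash_{\ell\cdot\card I}\varphi_v(G)$ is what makes this possible. The paper takes $X_b$ to be simply the tail of $X_a$ after the first occurrence of $x_{t_n}$, with variables renamed, not $X_a[u\cdot x_n x_{n+1}\cdots]$. It lets $K$ consist of:
\begin{itemize}
\item[(i)] for $v\in J$, all $v\cdot X_a[\hat u]$ with $\hat u\in\OSub^=(u)$ for some $u$ at level $n$ and $\varphi_v(v\cdot X_a[\hat u])$;
\item[(ii)] for $v\in I\setminus J$, all $v\cdot X_a[z]$ with $z\in A^{t_n}$.
\end{itemize}

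To check matching, fix $w\in\WSub^*(X_b)$. For each $z\in A^{t_n}$ we have $X_a[z]\cdot w\in\WSub^*(X_a)$, so you can color $z$ by the pair consisting of the least $v\in I$ with $\WSub^=(v\cdot X_a[z]\cdot w)$ $f$-monochromatic, together with its color. This uses $\ell\cdot\card I$ colors. \Cref{lem:graham-rothschild-tree} then gives a node $u$ at level $n$, depending on $w$, with $\WSub^=(u)$ monochromatic, say with pair $(v,j)$.
\begin{itemize}
\item[(a)] If $v\notin J$, any $z\in\WSub^=(u)$ gives a stem $v\cdot X_a[z]\in K$ that matches $w$.
\item[(b)] If $v\in J$, the forcing question at this particular $u$ gives $\hat u\in\OSub^=(u)$ with $v\cdot X_a[\hat u]\in K$. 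Every word in $\WSub^=(v\cdot X_a[\hat u]\cdot w)$ lies in $\WSub^=(v\cdot X_a[z]\cdot w)$ for some $z\in\WSub^=(\hat u)\subseteq\WSub^=(u)$, so it has color $j$.
\end{itemize}
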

\begin{proof}
By $\BSig_1(X_a \oplus Y)$, there is a common level witness~$n$ for all $v \in J$. Note that the forcing questions are monotonic in their level witnesses, so they still hold for a simultaneous witness. Moreover, the map $(I, a, J) \mapsto n$ is $\Delta_1(Q \oplus Y)$, hence $\Delta_1(Y')$.

Let $m$ be the position of the first occurrence of~$x_{t_n}$ in $X_a$ (where $t_n$ is the length of all the nodes at level~$n$ in $\T_{A, \ell \cdot \card I}$). One can find $Q \oplus Y$-computably some~$M$-finite set~$K$ of all $v\cdot X_a[\hat u]$ such that either
\begin{itemize}
    \item[(i)] $v\in J$, $\h u\in \OSub^=(u)$ for some $u$ at level $n$ of $\T_{A,\ell \cdot \card I}$ and $\varphi_v(v \cdot X_a[\h u])$; or
    \item[(ii)] $v\in I\setminus J$ and $\hat u\in A^{t_n}$
\end{itemize}
Moreover, one can $Q \oplus Y$-computably find some~$b \in M$ such that $X_b = X_a \uhr_m^{\infty}$, with the variables renamed accordingly. The verification that $(K, b)$ is the desired extension is exactly the proof of \cite[Lemma 3.19]{liu2026reverse} and is formalizable over $\RCA_0$.
\end{proof}

The following $\Pi^0_1$-extension lemma constitutes the new combinatorial content of the article. For this lemma, we need to restrict ourselves to families of $\Sigma^0_1$-formulas $(\varphi_{v, i})_{v \in I, i < r}$ which satisfy some strong structural properties, that is, $\varphi_{v,i}(G) \to \varphi_{v,i+1}(G)$ for every~$G$, every~$v \in I$ and every $i < r-1$. We shall then generalize it to arbitrary families in \Cref{lem:simultaneous-decision} thanks to a counting trick.

\begin{lemma}[$\Pi^0_1$-extension]\label[lemma]{lem:pi01-extension}
Let $(I, a)$ be a $\PP$-condition, $r \in M$ and $(\varphi_{v, i})_{v \in I, i < r}$ be  uniformly $\Sigma_1(Y)$-formulas such that $\varphi_{v,i}(G) \to \varphi_{v,i+1}(G)$ for every~$G$, every~$v \in I$ and every $i < r-1$. There is some $\PP$-condition $(I, b) \leq (I, a)$ and a function $\beta : I \to \{0, \dots, r\}$ such that for every $v \in I$,
\begin{itemize}
    \item[(1)] $(v, b) \Vdash \neg \varphi_{v, \beta(v)-1}(G)$ if $\beta(v) > 0$;
    \item[(2)] $(v, b) \qvdash_{\ell \cdot \card I} \varphi_{v, \beta(v)}(G)$ if $\beta(v) < r$.
\end{itemize}
Moreover, $b$ and $\beta$ can be computed from $(I, a)$, $r$ and $(\varphi_{v, i})_{v \in I, i < r}$ by a $\Delta_1(Y')$ function.
\end{lemma}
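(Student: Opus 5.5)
The plan is to avoid any iteration over $v \in I$ or over $i < r$, since that would need $\Sigma^0_2$-induction. Instead, all the elements of $I$ are handled at once by a single application of weak K\"onig's lemma to a product Graham-Rothschild tree. The choice of $\beta$ is made by a bounded $Y'$-computable search over the $M$-finitely many candidate functions.

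Let $d = \card I$ and enumerate $I = \{v_0, \dots, v_{d-1}\}$. For a candidate $\gamma : I \to \{0, \dots, r\}$, let $S_\gamma \subseteq \T^d_{A, \ell\cdot d}$ be the subtree of tuples $\vec u = (u_0, \dots, u_{d-1})$ at level $n$ satisfying the following: for every $j$ with $\gamma(v_j) > 0$ and every $\hat w \in \OSub^=(\vec u)$ (so $\hat w$ ranges over words obtained from the composite $u_0[u_1]\cdots[u_{d-1}]$, arranged so that for coordinate $j$ we substitute into $X_a$), the formula $\varphi_{v_j,\gamma(v_j)-1}(v_j \cdot X_a[\hat w])$ fails within the bound given by the node. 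This is a $\Delta_1(Q\oplus Y)$ tree, and it is $g$-bounded since GR-trees are finitely branching with primitive recursive level sizes. The statement "$S_\gamma$ is infinite" is $\Pi_1(Q \oplus Y)$, so it is decidable by $Y'$ because $(Q\oplus Y)' \leq_T Y'$.

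For $\gamma \equiv 0$, the tree $S_\gamma$ is the whole product tree, hence infinite. Using $Y'$, I search among the $M$-coded functions $\gamma$ for one with $S_\gamma$ infinite and $\sum_v \gamma(v)$ maximal; this is a bounded search, which is legitimate in $\M[P]$ since $\BSig_2$ holds in $\M$. Call it $\beta$. The monotonicity $\varphi_{v,i} \to \varphi_{v,i+1}$ makes infinitude of $S_\gamma$ downward closed in $\gamma$, so only the maximal elements matter. Next, by \Cref{lem:wkl-to-bounded} inside the Scott ideal coded by $Q$, I pick a path $\vec V$ through $S_\beta$ and choose $b$ with $X_b = X_a[\vec V]$, where the $Q$-index is computed from a tree index. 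This makes $b$ and $\beta$ $\Delta_1(Y')$ computable. Item (1) then follows by "forcing implies truth": every finite $\hat v \in [v, b]$ has the form $v\cdot X_a[\hat w]$ with $\hat w$ arising from a node on $\vec V$, so the formula $\varphi_{v,\beta(v)-1}$ never holds on it.

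For item (2), suppose $\beta(v_j) < r$ but $(v_j, b) \nqvdash_{\ell d} \varphi_{v_j,\beta(v_j)}(G)$. Then for every level $n$ there is a $u$ at level $n$ of $\T_{A,\ell d}$ with no $\hat u \in \OSub^=(u)$ satisfying $\varphi_{v_j,\beta(v_j)}(v_j\cdot X_b[\hat u])$. Composing such witnesses with the nodes of $\vec V$, replacing the $j$th coordinate, should produce arbitrarily long nodes of $S_{\beta + 1_{v_j}}$, contradicting the maximality of $\beta$.

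Finally, $f$-matching of $(I, b)$ should follow as in \cite{liu2026reverse}. The number of colours $\ell\cdot d$ in the GR-tree is designed so that \Cref{lem:graham-rothschild-tree} applied along $\vec V$ gives, for each $w \in \WSub^*(X_b)$, some $v\in I$ with $\WSub^=(v \cdot w)$ monochromatic. Since $X_b \in \OSub^M(X_a)$ and the stems $v$ are unchanged, we get $(I,b) \leq (I,a)$.

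I expect the main obstacle to be the compatibility step in item (2). The witnesses against the forcing question at $b$ live in $\T_{A,\ell d}$ relative to $X_b$, and they must be re-expressed as nodes of the product tree relative to $X_a$ while keeping the other coordinates' constraints intact. I would first try to design the product tree so that coordinate $j$ acts on the innermost substitution, that is, redefine $S_\gamma$ with the constraint for $v_j$ placed at the $j$th nesting level. This would make replacing one coordinate of a path preserve the constraints on the others. If that fails, I would weaken (2) to be witnessed via the original $X_a$ and adjust \Cref{lem:sigma01-extension} accordingly.
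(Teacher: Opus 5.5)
There is a genuine gap, and it is in the step you yourself flag: item (2). Rearranging which coordinate handles which stem does not fix it. Suppose $(v_j,b)\nqvdash_{\ell\cdot\card I}\varphi_{v_j,\beta(v_j)}(G)$. A path $V'$ through the tree of counterexamples then satisfies $(v_j,X_b[V'])\Vdash\neg\varphi_{v_j,\beta(v_j)}(G)$, where $X_b[V']=X_a[V_0]\cdots[V_{d-1}][V']$. So what you actually obtain is a witness of depth $d+1$, namely $(\vec V,V')$, for the function $\beta'$ that equals $\beta$ except $\beta'(v_j)=\beta(v_j)+1$.

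Your $\beta$ is maximal only among the $\gamma$ realizable at the fixed depth $d=\card I$, so this is not a contradiction. Nothing lets you bring $(\vec V,V')$ back to depth $d$:
\begin{itemize}
\item a composite such as $V_{d-1}[V']$ is an ordered $M$-variable word, but in general not a path through $\T_{A,\ell\cdot\card I}$, because composition does not respect the block structure $[t_s,t_{s+1})$;
\item replacing an inner coordinate $V_j$ changes the composite against which the other stems' constraints were checked.
\end{itemize}
In fact, in your definition of $S_\gamma$ every stem is tested against the full composite, so coordinates are not attached to stems and the value $d=\card I$ plays no role. Your fallback of witnessing (2) relative to $X_a$ changes the statement. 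Moreover, $(v,a)\qvdash\varphi$ does not transfer to $(v,b)$, and $(v,b)\qvdash\varphi$ is what \Cref{lem:sigma01-extension} needs when it is applied to $(I,b)$.

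The missing idea is to let the depth vary and use a stabilization argument. For $d\le 1+r\cdot\card I$, let $E(d,\gamma)$ say that some $\vec V\in[\T^d]$ satisfies $(v,X_a[\vec V])\Vdash\neg\varphi_{v,\gamma(v)-1}(G)$ for every $v$ with $\gamma(v)>0$. Let $g(d)$ be the largest weight $\sum_v\gamma(v)$ with $E(d,\gamma)$. The table of $E$ is an $M$-finite $\Pi_1(Q\oplus Y)$ object, so it is coded by $\BSig_2$ and computable from $Y'$.
\begin{itemize}
\item Appending a further GR-tree path preserves $\Pi^0_1$ forcing, so $g$ is non-decreasing.
\item $g$ is bounded by $r\cdot\card I$, so by the finite pigeonhole principle there is $d^*\le r\cdot\card I$ with $g(d^*+1)=g(d^*)$.
\item Take $\beta$ of weight $g(d^*)$ with $E(d^*,\beta)$, a witness $\vec V$ in the Scott ideal, and $X_b=X_a[\vec V]$.
\end{itemize}
Now the depth-$(d^*+1)$ witness $(\vec V,V')$ gives $g(d^*+1)\ge g(d^*)+1$, which is the contradiction you need.

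Separately, $f$-matching of $(I,b)$ needs no appeal to \Cref{lem:graham-rothschild-tree}. Since $X_b\in\OSub^M(X_a)$, we have $\WSub^*(X_b)\subseteq\WSub^*(X_a)$, and the stems are unchanged.
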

\begin{proof}
%A \emph{profile} is a map $\beta : I \to \{0, \dots, r\}$. The \emph{weight} of $\beta$ is $|\beta| = \sum_{v \in I} \beta(v) \leq r \cdot \card I$.
Let $\T^d = \T^d_{A,\ell \cdot \card I}$ be the $d$-product of GR-trees as in the paragraph after \Cref{lem:graham-rothschild-tree}. Given $\vec{V} = (V_0, \dots, V_{d-1}) \in [\T^d]$, let 
$$X_a[\vec{V}] = X_a[V_0][V_1] \cdots [V_{d-1}].$$ 
Note that $X_a[\vec{V}][u] = X_a[\vec{V}[u]]$ for every finite~$u$. Let $\C^d_\beta$ be the $\Pi_1(X_a \oplus Y)$ class of all $\vec{V} \in [\T^d]$ such that
$$
(v, X_a[\vec{V}]) \Vdash \neg \varphi_{v, \beta(v)-1}(G) \text{ for every $v \in I$ with $\beta(v) > 0$.}
$$
Furthermore, one can find an index of an $X_a \oplus Y$-primitive recursive tree $T^d_\beta$ such that $[T^d_\beta] = \C^d_\beta$ computably in~$(a, d, \beta)$. Here, the forcing relation is natural generalization of \Cref{caOVW-def-forcing}(2). Let
$$
E(d, \beta) = \qt{\C^d_\beta \neq \emptyset}
$$
Equivalently, $E(d, \beta)$ holds iff $T^d_\beta$ has a node at every level, so $E(d, \beta)$ is a $\Pi_1(Q \oplus Y)$ property of $(a, d, \beta)$.
Since $\M \models \BSig_2$ and $(Q \oplus Y)' \leq_T Y'$, $\M[Q] \models \BSig_2$, so the set
$$
\langle E(d, \beta) : d \leq 1 + r \cdot \card I, \beta : I \to \{0, \dots, r\} \rangle
$$
is $M$-regular, and so is the function $g : 2 + r \cdot \card I \to 1 + r \cdot \card I$ defined by
$$
g(d) = \max \left\{ \sum_{v \in I} \beta(v) : E(d, \beta) \right\}
$$
Note that $E(d, \mathbf{0})$ holds for every~$d \leq 1 + r \cdot \card I$, where $\mathbf{0} : I \to \{0, \dots, r\}$ is the constant map $v \mapsto 0$. So the function $g$ is well-defined.
Moreover, if $E(d, \beta)$ holds, then so does $E(d+1, \beta)$. Indeed, appending any path~$V'$ of the Graham-Rothschild tree does not affect the $\Pi^0_1$ forcing relation. It follows that the function $g$ is non-decreasing. 

By the finite pigeonhole principle (which holds over~$\RCA_0$), there is some least~$d^*$ such that $g(d^* + 1) = g(d^*)$. Let $\beta : I \to \{0, \dots, r\}$ be such that $g(d^*) = \sum_{v \in I} \beta(v)$ and $E(d^*, \beta)$ holds. Let $\vec{V} \in \C^{d^*}_\beta$ be in the Scott ideal $\{X_c : c \in M\}$ and $b$ be such that $X_b = X_a[\vec{V}]$. Then $(I, b)$ is a $\PP$-precondition and is $f$-matching since only the reservoir is restricted. Moreover, Item (1) holds by definition of $\C^{d^*}_\beta$.

We claim that Item (2) holds. Suppose for the contradiction that $\beta(v) < r$ and $(v, b) \nqvdash_{\ell \cdot \card I} \varphi_{v, \beta(v)}(G)$ for some~$v \in I$. The tree 
$$
U = \{ u \in \T_{A,\ell \cdot \card I} : \forall \h u \in \OSub^=(u)  \neg \varphi_{v, \beta(v)}(v \cdot X_b[\h u])\}
$$ 
is $M$-unbounded. Let $V' \in [U]$
in the Scott ideal $\{X_c : c \in M\}$. Then $(v, X_b[V']) \Vdash \neg \varphi_{v, \beta(v)}(G)$. Let $\beta'$ be equal to $\beta$, except that $\beta'(v) = \beta(v)+1 \leq r$. Note that $(\vec{V}, V') \in \C^{d^*+1}_{\beta'}$, so $E(d^*+1, \beta')$ holds. Moreover, since $g$ is non-decreasing and $d^*$ is the least such that $g(d^*+1) = g(d^*)$, we have $d^* \leq r \cdot \card I$, so $d^* + 1$ belongs to the domain of~$g$. So
$$
g(d^*+1) \geq \sum_{w \in I} \beta'(w) = g(d^*)+1 > g(d^*)
$$
contradicting the choice of~$d^*$. So Item~(2) holds. This completes the proof of the claim.

Last, all the steps of the construction are effective. The table of~$E$ and the function~$g$ are computed from $M$-finitely many bits of~$(Q \oplus Y)' \leq_T Y'$. The search for~$d^*$ and $\beta$ is bounded. So $b$ and $\beta$ are computed by a $\Delta_1(Y')$ function.
This completes the proof of \Cref{lem:pi01-extension}.
\end{proof}

We now combine \Cref{lem:pi01-extension} and \Cref{lem:sigma01-extension} to prove a simultaneous decision lemma, which is the Shore blocking counterpart of \cite[Lemma 3.20]{liu2026reverse}. 

\begin{lemma}[Simultaneous decision]\label[lemma]{lem:simultaneous-decision}
Let $(I, a)$ be a $\PP$-condition, $r \in M$ and $(\varphi_{v, i})_{v \in I, i < r}$ be a uniformly $\Sigma_1(Y)$-formulas. There is some $\PP$-condition $(K, c) \leq (I, a)$ such that for every $v \in I$, every $\hat v \in K$ extending~$v$ and every~$i < r$, the $\PP$-branch $(\hat v, c)$ decides $\varphi_{v,i}(G)$. More precisely, letting $S_{\hat v} = \{ i < r : \varphi_{v,i}(\hat v) \}$,
\begin{itemize}
    \item[(1)] $(\h v, c) \Vdash \varphi_{v,i}(G)$ for every $i \in S_{\h v}$;
    \item[(2)] $(\h v, c) \Vdash \neg \varphi_{v,i}(G)$ for every $i \in r \setminus S_{\h v}$.
\end{itemize}
Moreover, $(K, c)$ can be computed from $(I, a)$, $r$ and $(\varphi_{v, i})_{v \in I, i < r}$ by a $\Delta_1(Y')$ function.
\end{lemma}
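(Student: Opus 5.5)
The plan is to reduce the arbitrary family to a monotone one by a counting trick, apply \Cref{lem:pi01-extension} once, then apply \Cref{lem:sigma01-extension} once. For $v \in I$ and $j < r$ (with an extra index if needed), let $\psi_{v,j}(G)$ be the $\Sigma_1(Y)$-formula \qt{at least $j+1$ indices $i < r$ satisfy $\varphi_{v,i}(G)$}. Since each $\varphi_{v,i}$ is in normal form $\exists n\,\theta_{v,i}(G\uhr n)$, $\psi_{v,j}(G)$ can be written as $\exists n$ \qt{there is a set $F \subseteq r$ of size $j+1$ with $\theta_{v,i}(G \uhr n)$ for all $i \in F$}. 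This is monotone in $n$ and $\Delta_0$ inside, so it is uniformly $\Sigma_1(Y)$. We have $\psi_{v,j+1} \to \psi_{v,j}$. To match the hypothesis of \Cref{lem:pi01-extension} (increasing in the index), re-index by $\chi_{v,i} := \psi_{v,r-1-i}$, so $\chi_{v,i} \to \chi_{v,i+1}$.

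First, apply \Cref{lem:pi01-extension} to $(I,a)$ and $(\chi_{v,i})$, getting $(I,b) \le (I,a)$ and $\beta$. Translated back, for each $v$ there is a threshold $m(v)$ such that:
\begin{itemize}
\item $(v,b) \Vdash \neg\psi_{v,m(v)}$, meaning no finite extension in $[v,b]$ satisfies more than $m(v)$ of the $\varphi_{v,i}$;
\item $(v,b) \qvdash_{\ell\cdot\card I} \psi_{v,m(v)-1}$, meaning at least $m(v)$ of them can be forced true via the forcing question.
\end{itemize}
The boundary cases $\beta(v)\in\{0,r\}$ are handled by convention: $\psi_{v,-1}$ is trivially true, and $\psi_{v,r}$ is false.

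Second, let $J = \{v \in I : m(v) > 0\}$ and apply \Cref{lem:sigma01-extension} to $(I,b)$, $J$ and the formulas $\psi_{v,m(v)-1}$. Note that $\card I$ is unchanged, so the index $\ell\cdot\card I$ matches. This yields $(K,c) \le (I,b)$ such that every $\hat v \in K$ extending $v \in J$ satisfies $\psi_{v,m(v)-1}(\hat v)$.

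Now verify decision. Fix $\hat v \in K$ extending $v$. The set $S_{\hat v}$ is $M$-finite by bounded $\Delta_0$ comprehension and has size $\ge m(v)$. For $i \in S_{\hat v}$, forcing $\varphi_{v,i}$ is immediate by Definition~\ref{caOVW-def-forcing}(1). For $i \notin S_{\hat v}$, suppose some finite $w \in [\hat v, c] \subseteq [v,b]$ satisfies $\varphi_{v,i}(w)$. By monotonicity of normal forms, $w$ also satisfies every $\varphi_{v,i'}$ with $i' \in S_{\hat v}$, so $w$ satisfies at least $m(v)+1$ of them, contradicting $(v,b)\Vdash\neg\psi_{v,m(v)}$. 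The inclusion $[\hat v,c]\subseteq[v,b]$ uses $(\hat v,c)\le(v',b)$ for some $v'\in I$; since all stems in $I$ have equal length and $\hat v\succeq v$, we get $v'=v$.

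Effectivity follows by composing the two $\Delta_1(Y')$ functions. The indices of the $\psi_{v,j}$ are primitive recursive in those of the $\varphi_{v,i}$.

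The main obstacle is the combinatorial fact that the counting formulas are genuinely $\Sigma^0_1$ in normal form and monotone, using only the bounded induction available. One must use a single common $n$ and bounded search over subsets of $r$, rather than collection over $i<r$, which would require $\BSig_1$ relative to $G$. I would state this via the normal form as above.
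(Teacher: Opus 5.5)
Your proposal is correct and is essentially the paper's proof. Your re-indexed $\chi_{v,i}$ (``at least $r-i$ of the $\varphi_{v,j}$ hold'') is exactly the paper's counting formula $\psi_{v,i}$, and the rest matches step for step: one application of the $\Pi^0_1$-extension lemma, one application of the $\Sigma^0_1$-extension lemma on $J=\{v:\beta(v)<r\}$, and the same counting contradiction. You also spell out two details the paper leaves implicit: the normal form of the counting formulas (using a common witness $n$), and the fact that length-homogeneity forces the $I$-stem below $\hat v$ to be $v$, which gives $[\hat v,c]\subseteq[v,b]$.
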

\begin{proof}
For every~$v \in I$ and $i \leq r$, let $\psi_{v, i}(G)$ be the $\Sigma_1(Y)$-formula
$$
\exists S \subseteq \{0, \dots, r-1\} (\card S = r-i \wedge \forall j \in S\ \varphi_{v,j}(G)).
$$
Note that the formulas $(\psi_{v,i})_{v \in I, i < r}$ satisfy the hypothesis of the $\Pi^0_1$-extension lemma (\Cref{lem:pi01-extension}), so there is a $\PP$-condition $(I, b) \leq (I, a)$ and $\beta : I \to \{0, \dots, r\}$ such that for every $v \in I$:
\begin{itemize}
    \item[(a)] if $\beta(v) > 0$, then $(v, b) \Vdash \neg \psi_{v,\beta(v)-1}(G)$;
    \item[(b)] if $\beta(v) < r$, then $(v, b) \qvdash_{\ell \cdot \card I} \psi_{v, \beta(v)}(G)$.
\end{itemize}
Let $J = \{ v \in I : \beta(v) < r \}$. By (b) and the $\Sigma^0_1$-extension lemma applied to $(I, b)$, $J$ and $(\psi_{v, \beta(v)})_{v \in J}$, there is some~$\PP$-condition $(K, c) \leq (I, b)$ such that $\psi_{v,\beta(v)}(\hat v)$ holds for every~$v \in J$ and every $\h v \in K$ extending~$v$.

Fix $v \in I$ and $\h v \in K$ extending~$v$. Item (1) holds by \Cref{caOVW-def-forcing}(1). By (b), letting $S_{\hat v} = \{ i < r : \varphi_{v,i}(\hat v) \}$, $\card S_{\h v} \geq r - \beta(v)$. Moreover, by (a), $\card S_{\h v} \leq r - \beta(v)$, so $\card S_{\h v} = r - \beta(v)$. Suppose for the contradiction that Item (2) fails, that is, suppose that $(\h v, c) \nvdash \neg \varphi_{v, i}(G)$ for some~$i \in r \setminus S_{\h v}$. Then there is some finite $w \in [\h v, c]$ such that $\varphi_{v, i}(w)$ holds. Since $\h v \preceq w$, then by monotonicity, $\varphi_{v, j}(w)$ holds for every $j \in S_{\h v} \cup \{i\}$. These are $r - \beta(v)+1$ formulas. In particular, $\beta(v) > 0$, and $w \in [v, b]$ contradicts (a).

The effectiveness follows from that of \Cref{lem:pi01-extension} and \Cref{lem:sigma01-extension}.
\end{proof}

\subsection{Forcing the requirements}

We are going to force two kind of requirements: 
\begin{itemize}
    \item[($\H$)] \emph{Homogeneity}: we want to ensure that for the generic $M$-ordered variable word~$G$ will be $f$-monochromatic at $M$-unboundedly many levels;
    \item[($\L$)] \emph{Weak lowness}: we want to decide initial segments of the jump of~$G \oplus Y$ simultaneously for segments of $M$-unbounded length.
\end{itemize}

Given some~$t \in M$, let $\varphi_t(G)$ be the following $\Sigma_1(Y)$-formula:
\begin{align}\label{caOVW-defvarphi}
&\text{ for some $n$, $G$ has at least $n+1$ variable kinds, } \nonumber\\
& \text{ the first occurrence of $x_n$ is at position at least~$t$, and } \nonumber\\
& \text{ $G[A^n] = \{ G[u] : u \in A^n \}$ is $f$-monochromatic.}
\end{align}

The following lemma is a straightforward adaptation of Liu and Patey~\cite[Lemma 3.21]{liu2026reverse} which holds over~$\RCA_0$:

\begin{lemma}[Liu and Patey~\cite{liu2026reverse}, $\RCA_0$]\label[lemma]{lem:deletion}
Let $(I, a)$ be a $\PP$-condition. Let $J$ be the set of $v \in I$ such that $(v, a) \Vdash \neg \varphi_{t(I)}(G)$. Then $(I \setminus J, a)$ is a $\PP$-condition.
\end{lemma}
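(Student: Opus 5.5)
We show that $(I \setminus J, a)$ is again a $\PP$-precondition and that it is $f$-matching. The matching property follows directly from the definitions of $\varphi_{t(I)}$ and of the forcing relation: any branch $v$ that is needed to witness $f$-matching for some $w$ yields a finite element of $[v,a]$ satisfying $\varphi_{t(I)}$, so $v \notin J$.

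\emph{Preconditionhood.} We must check that $I \setminus J$ is $M$-coded. The relation $(v,a) \Vdash \neg\varphi_{t(I)}(G)$ is $\Pi_1(Q \oplus Y)$: it says that for every finite $\h v \in [v,a]$ and every $n$, $\neg\psi(\h v \uhr n)$. So $J$ is a $\Pi_1(Q\oplus Y)$-definable subset of the $M$-finite set $I$. Bounded $\Sigma_1$- (equivalently $\Pi_1$-) definable sets are $M$-coded by $\ISig_1$, which holds in $\M[Q]$ since $(Q\oplus Y)' \leq_T Y'$. Hence $J$ and $I \setminus J$ are $M$-coded. Length-homogeneity and the branch property are inherited from $I$.

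\emph{$f$-matching.} Fix $w \in \WSub^*(X_a)$. Write $w = X_a[u]$ with $|u| = n$, so that $w$ is $X_a$ cut just before the first occurrence of $x_n$. Since $(I,a)$ is $f$-matching, there is $v \in I$ such that $\WSub^=(v \cdot w)$ is $f$-monochromatic. We claim $v \notin J$. Let $m$ be the number of variable kinds of $v$. Consider the finite word $G$ obtained from $v \cdot X_a$ as follows:
\begin{itemize}
\item keep the variables $x_0,\dots,x_{m-1}$ of $v$;
\item substitute the first $n$ variables of $X_a$ by the letters of $u$;
\item rename the next variable of $X_a$ (its $x_n$) to $x_m$;
\item cut right after its first occurrence.
\end{itemize}
Then $G \in \OSub(v \cdot X_a)$, $v \preceq G$, and $G$ is finite, so $G \in [v,a]$. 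Moreover, $G$ has $m+1$ variable kinds, and the first occurrence of $x_m$ is at position $|v| + |w| \geq t(I)$. Finally, $G[A^m] = \WSub^=(v\cdot w)$, because substituting into $G$ cuts exactly before $x_m$. This set is $f$-monochromatic, so $\varphi_{t(I)}(G)$ holds with witness $m$. If $v \in J$, then by \Cref{caOVW-def-forcing}(2) no finite element of $[v,a]$ satisfies $\varphi_{t(I)}$, a contradiction. Thus $v \in I\setminus J$ witnesses matching for $w$.

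The only point needing care is the bookkeeping showing that $G$ genuinely lies in $[v,a]$. This requires the disjointness of variable kinds and the ordered-variable constraint; the latter holds because the renamed variable comes after all of $v$'s variables and no substituted variable of $X_a$ reappears. The whole argument uses only $\ISig_1$, so it goes through over $\RCA_0$.
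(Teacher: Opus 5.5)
Your proof is correct and is the natural argument the paper defers to (Liu--Patey, Lemma 3.21): the $f$-matching witness $v$ for $w$ yields a finite element of $[v,a]$ satisfying $\varphi_{t(I)}$ with witness $m$, so $v\notin J$; codedness of $I\setminus J$ comes from bounded $\Pi_1$ comprehension, exactly as you say. One small correction: elements of $\OSub(v\cdot X_a)$ always end just before the first occurrence of some variable, so rather than cutting right after the renamed variable you should take $(v\cdot X_a)[x_0\cdots x_{m-1}\,u\,x_m]$, which stops before the first occurrence of $X_a$'s $x_{n+1}$ (alternatively, your word is a prefix of this one, and that already suffices because \Cref{caOVW-def-forcing}(2) tests every prefix $\hat v\uhr n$).
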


We can therefore use the simultaneous decision lemma in a very general form to obtain the desired density lemma for homogeneity requirements:

\begin{lemma}[Homogeneity]\label[lemma]{lem:homogeneity-density}
Let $(I, a)$ be a $\PP$-condition and $r \in M$. There is a $\PP$-condition $(K, c) \leq (I, a)$ such that for every $\h v \in K$, $(\h v, c) \Vdash \varphi_r(G)$.
Moreover, $(K, c)$ is computed from $(I, a)$ and $r$ by a $\Delta_1(Y')$ function.
\end{lemma}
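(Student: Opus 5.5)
The plan is to apply \Cref{lem:simultaneous-decision} with a single formula per branch, then prune with \Cref{lem:deletion}. Concretely, apply it to $(I,a)$ with $r=1$ and, for each $v \in I$, $\varphi_{v,0}(G) := \varphi_{r'}(G)$. Here $r'$ is chosen so that the threshold in $\varphi$ is at least both $r$ and the eventual stem length; taking $r' = r$ suffices, since $\varphi_t$ is antimonotone in $t$. This yields a $\PP$-condition $(K_0,c) \leq (I,a)$ such that every branch $(\hat v, c)$ with $\hat v \in K_0$ either forces $\varphi_r(G)$ (that is, $\varphi_r(\hat v)$ holds) or forces $\neg\varphi_r(G)$. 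The step is $\Delta_1(Y')$ by the effectiveness clause of \Cref{lem:simultaneous-decision}.

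Next, let $J$ be the set of $\hat v \in K_0$ with $(\hat v, c) \Vdash \neg \varphi_r(G)$. To invoke \Cref{lem:deletion} I need $J$ to be exactly the set of branches forcing $\neg\varphi_{t(K_0)}(G)$. This threshold mismatch is the main obstacle. Two routes seem available.

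The first route is to ensure $t(K_0) \geq r$ beforehand. This can be done by first extending $(I,a)$ with \Cref{lem:sigma01-extension} applied to a trivially true formula, or by moving enough letters of the reservoir into the stems so that all stems have length at least $r$. After that, $\varphi_{t(K_0)}$ implies $\varphi_r$, and so $\neg\varphi_r$ forced implies $\neg\varphi_{t(K_0)}$ forced.

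The second route, which I would try first because it is cleaner, is to apply the decision lemma directly to $\varphi_{v,0} := \varphi_{\max(r,t(K))}$. This is possible because the stems of $K$ all have the same length, and that length is known in advance: the construction in \Cref{lem:sigma01-extension} determines it. Alternatively, following the remark before \Cref{lem:sigma01-extension}, I would use $\varphi(G) := \varphi_{\max(r,|G\uh_t|)}$ with the formula depending on the stem, since the lemma allows formulas indexed by the stem.

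With the thresholds aligned, \Cref{lem:deletion} shows that $(K_0 \setminus J, c)$ is a $\PP$-condition. Every remaining branch $\hat v$ satisfies $(\hat v,c) \Vdash \varphi_{t}(G)$ for some $t \geq r$, hence $(\hat v, c) \Vdash \varphi_r(G)$ by monotonicity of the threshold. It extends $(I,a)$ because $K_0 \setminus J \subseteq K_0$.

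Finally, computing $J$ only requires checking the $\Delta_0$ fact $\varphi_r(\hat v)$ for $M$-finitely many stems. Item (1) of \Cref{lem:simultaneous-decision} guarantees that the stems not forcing the negation are exactly those where $\varphi_r(\hat v)$ holds. So the whole map $(I,a,r) \mapsto (K,c)$ is $\Delta_1(Y')$.

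The one subtle point to verify is nonemptiness. \Cref{lem:deletion} preserves $f$-matching, so the resulting condition has a nonempty $K$ whenever $\WSub^*(X_c)$ is nonempty, and that always holds.
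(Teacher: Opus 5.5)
Your first route is exactly the paper's argument, and with it your proof is complete.

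\emph{What the paper does.} It replaces $(I,a)$ by $(J,b)$, where $J=\{v\cdot X_a[w] : v\in I,\ w\in A^r\}$ and $X_b$ is the tail of $X_a$ from the first occurrence of $x_r$, with variables renamed. Then $t(J)>r$, and $(J,b)$ is $f$-matching because $X_a[w]\cdot w'\in\WSub^*(X_a)$ for every $w'\in\WSub^*(X_b)$. It then applies \Cref{lem:simultaneous-decision} to the single formula $\varphi_r(G)$. Since $r\le t(K)$, every branch forcing $\neg\varphi_r(G)$ also forces $\neg\varphi_{t(K)}(G)$, so \Cref{lem:deletion} removes them.

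\emph{Two small remarks on route one.} You do not need the two sets of branches to coincide. Deletion removes the branches forcing $\neg\varphi_{t(K)}(G)$, and $f$-matching is preserved when the set of stems is enlarged, so removing only your $J$ is also fine. Of your two ways to lengthen the stems, use the letter substitution. Applying \Cref{lem:sigma01-extension} to a trivially true formula does not by itself guarantee stems of length at least $r$, since its level witness may be $0$.

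Drop the second route, which you say you would try first; it cannot work. For a threshold $T$ the inequalities you need are $r\le T\le t(K)$, not $T\ge t(K)$. Your opening sentence already has the second inequality backwards.

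\begin{itemize}
\item Setting $T=\max(r,t(K))$ is circular. $t(K)$ is fixed by the level witness chosen inside \Cref{lem:sigma01-extension}, and that witness depends on the formulas you feed in.
\item It is also self-defeating. $(\h v,c)\Vdash\varphi_T(G)$ means $\varphi_T(\h v)$ holds, i.e.\ some $x_n$ first occurs in $\h v$ at a position $\geq T$. This is impossible when $T\geq|\h v|=t(K)$. So every branch would force the negation and you would delete every stem, which no $\PP$-condition allows.
\item The stem-indexed variant $\varphi_{\max(r,|G\uh_t|)}$ does not rescue it. There $t$ is the common length of the \emph{input} stems, so the formula is just $\varphi_{\max(r,t(I))}$, and the mismatch persists whenever $r>t(I)$.
\end{itemize}
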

\begin{proof}
First, let $J = \{ v \cdot X_a[w] : w \in A^r \}$, and let $X_b$ be $X_a$ appropriately truncated, so that $(J, b) \leq (I, a)$ with $t(J) > r$. In particular, $(J, b)$ is $f$-matching, so it is a $\PP$-condition.
Apply \Cref{lem:simultaneous-decision} with $(J, b)$ and the family $(\varphi_{v,0})_{v \in J}$, where $\varphi_{v,0}(G) := \varphi_r(G)$. The $\PP$-branches which do not force $\varphi_r(G)$ force $\neg \varphi_r(G)$, hence force $\neg \varphi_{t(K)}(G)$ since $r \leq t(K)$. By \Cref{lem:deletion}, one can remove all the branches which do not force $\varphi_r(G)$.
\end{proof}

The following density lemma for weak lowness requirements exploits the full power of the simultaneous decision lemma:

\begin{lemma}[Deciding the jump]\label[lemma]{lem:jump-density}
Let $(I, a)$ be a $\PP$-condition and $r \in M$. There is a $\PP$-condition $(K, c) \leq (I, a)$ such that for every $\h v \in K$, letting $S_{\h v} = \{ e < r : \Phi_e^{\h v \oplus Y} (e)\downarrow \}$, we have for every $e < r$:
\begin{itemize}
    \item[(a)] if $e \in S_{\h v}$, then  $(\h v, c) \Vdash \Phi_e^{G \oplus Y}(e)\downarrow$;
    \item[(b)] if $e \not\in S_{\h v}$, then $(\h v, c) \Vdash \Phi_e^{G \oplus Y}(e)\uparrow$.
\end{itemize}
Moreover, $(K, c)$ is computed from $(I, a)$ and $r$ by a $\Delta_1(Y')$ function.
\end{lemma}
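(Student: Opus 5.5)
The plan is to apply \Cref{lem:simultaneous-decision} directly, with the family $(\varphi_{v,e})_{v \in I, e < r}$ defined by $\varphi_{v,e}(G) := \Phi_e^{G \oplus Y}(e)\downarrow$. These formulas do not depend on $v$. First I would check that this is a uniformly $\Sigma_1(Y)$ family. In normal form, $\Phi_e^{G\oplus Y}(e)\downarrow$ is $\exists k\, \psi_e(G\uh_k)$, where $\psi_e(\sigma)$ says that the computation with oracle $\sigma \oplus Y$ converges within $|\sigma|$ steps. This $\psi_e$ is monotone in $\sigma$, and it is uniformly $\Delta_1(Y)$ in $e$, since only a bounded portion of~$Y$ is queried. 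As the paper does elsewhere, I treat it as the required monotonous matrix relative to the parameter~$Y$.

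\Cref{lem:simultaneous-decision} then yields a $\PP$-condition $(K,c) \leq (I,a)$. It guarantees that for every $v \in I$, every $\h v \in K$ extending $v$ and every $e<r$, the branch $(\h v, c)$ decides $\varphi_{v,e}(G)$. Its two items are stated with $S_{\h v} = \{e<r : \varphi_{v,e}(\h v)\}$. Here $\varphi_{v,e}(\h v)$ means exactly $\Phi_e^{\h v\oplus Y}(e)\downarrow$ along some initial segment of $\h v$, so this set coincides with the set $S_{\h v}$ of the present statement. Item~(1) of \Cref{lem:simultaneous-decision} therefore gives (a), and Item~(2) gives (b).

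It remains to see that every $\h v \in K$ extends some $v \in I$, so that the conclusion covers all of $K$. This holds because $(K,c) \leq (I,a)$: each $\h v\in K$ satisfies $(\h v,c) \leq (u,a)$ for some $u\in I$, and branch extension requires $\h v \succeq u$. The effectiveness clause is then immediate. The map $(I,a,r) \mapsto (I,a,r,(\varphi_{v,e})_{v,e})$ is computable, since the indices of the formulas are computed from $e$ alone. Composing it with the $\Delta_1(Y')$ function of \Cref{lem:simultaneous-decision} gives a $\Delta_1(Y')$ function.

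The only place needing care is the normal-form convention: the forcing relation of \Cref{caOVW-def-forcing} is written for $\Delta^0_0$ matrices $\psi(G\uh_n)$, whereas the halting matrix involves $Y$. I would handle this by noting that all the uses of $\psi$ in \Cref{caOVW-def-forcing} and in \Cref{lem:simultaneous-decision} are relativized to $Y$. Those uses are monotonicity, forcing implies truth, and the counting formulas $\psi_{v,i}$. They go through verbatim with $\Delta_0(Y)$ matrices, which is the sense in which \Cref{lem:simultaneous-decision} already allows uniformly $\Sigma_1(Y)$ families. I expect no genuine obstacle beyond this bookkeeping.
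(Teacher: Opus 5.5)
Your proposal is correct and follows the paper's proof exactly: apply the simultaneous decision lemma (\Cref{lem:simultaneous-decision}) to the family $\varphi_{v,e}(G) := \Phi_e^{G \oplus Y}(e)\downarrow$. The extra bookkeeping you supply is sound and left implicit in the paper: the $\Delta_0(Y)$ normal form, the identification of the two sets $S_{\h v}$, the fact that every $\h v \in K$ extends some $v \in I$, and the composition of effective maps.
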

\begin{proof}
Apply \Cref{lem:simultaneous-decision} with $\varphi_{v,e}(G) := \Phi_e^{G \oplus Y}(e)\downarrow$ for every~$v \in I$.
\end{proof}

\subsection{Proof of \Cref{prop:ovw-step}}

We now turn to the proof of \Cref{prop:ovw-step}.
Construct the following $Y'$-computable decreasing sequence of $\PP$-conditions 
$$(I_0, a_0) \geq (I_1, a_1) \geq \cdots$$
by stages as follows. First, $I_0 = \{\epsilon\}$, where $\epsilon$ is the empty sequence, and $X_{a_0}$ is the ordered $M$-variable word $x_0x_1x_2\cdots$. Assume $(I_s, a_s)$ has been defined. Let $r(s) = \max \{s, t(I_s), a_s\}$, where $t(I_s)$ is the common length of the elements of~$I_s$. Using \Cref{lem:homogeneity-density} and \Cref{lem:jump-density}, one can $Y'$-computably find a $\PP$-condition $(I_{s+1}, a_{s+1})$ such that
\begin{itemize}
    \item[(1)] for every~$v \in I_{s+1}$, $(v, a_{s+1}) \Vdash \varphi_{r(s)}(G)$;
    \item[(2)] for every~$v \in I_{s+1}$ and every $e < r(s)$, $(v, a_{s+1})$ decides $\Phi^{G \oplus Y}_e(e)$.
\end{itemize}
The set $J$ of all stages~$s$ such that $(I_s, a_s)$ is defined forms a $\Sigma_1(Y')$ cut in~$M$, which might be proper. Note that by $\Pi^0_1$-induction in~$\M[Q]$, for every~$s < t \in J$, $(I_s, a_s) \geq (I_t, a_t)$.

\begin{claim}
The sets $R = \{ r(s) : s \in J \}$ and $L = \{ t(I_s) : s \in J \}$ are $M$-unbounded.
\end{claim}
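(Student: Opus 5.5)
We need to show $R$ and $L$ are $M$-unbounded, where $J$ is a $\Sigma_1(Y')$ cut. The plan is to reduce to showing $J$ is not a proper cut, or, if it is, that $r$ and $t(I_\cdot)$ still grow cofinally. The natural route is to show $J = M$. If so, the claim follows at once: $r(s) \geq s$ gives $R$ unbounded. For $L$, condition (1) at stage $s$ says every $v \in I_{s+1}$ forces $\varphi_{r(s)}(G)$, hence $\varphi_{r(s)}(v)$ holds. In particular $v$ has a variable $x_n$ whose first occurrence is at position at least $r(s)$, so $t(I_{s+1}) = |v| > r(s) \geq s$. (Note $I_{s+1}$ is nonempty, since a $\PP$-condition is $f$-matching and $\WSub^*(X_a)$ contains the empty word.)

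To show $J = M$, I would argue that $J$ is closed under successor, using \Cref{lem:homogeneity-density} and \Cref{lem:jump-density}. Given $s \in J$, these lemmas provide $(I_{s+1}, a_{s+1})$ via a $\Delta_1(Y')$ function applied to $(I_s, a_s)$ and $r(s)$, and this function is total. So $s+1 \in J$, and $0 \in J$.

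The obstacle is that a $\Sigma_1(Y')$ cut closed under successor need not be all of $M$, since only $\BSig_2$ is available, not $\ISig_2$. Concretely, $J$ is $\Sigma_1$ in $P$, where $\M[P]\models \RCA_0^*$ has only $\IDelta_1$. So $J$ may be proper: the iteration is a $Y'$-primitive recursion, and its codes may grow too fast.

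I therefore would not rely on $J = M$, and would argue unboundedness directly. Suppose $R$ is bounded by some $b \in M$. Since $r(s) \geq s$ for all $s \in J$, the cut $J$ is then bounded by $b$. Also $a_s \leq r(s) < b$ and $t(I_s) \leq r(s) < b$ for all $s \in J$, and since $I_s$ consists of words of length $t(I_s)$ over a bounded alphabet, the code of $I_s$ is bounded by some $b'$ computable from $b$. So the entire sequence $\langle (I_s, a_s) : s \in J\rangle$ lives below a fixed bound. I would then show that its graph is $\Delta_1(P)$ on $J$: it is the $\Sigma_1(P)$ graph of the iteration, and bounded values let us complement it. With $\IDelta_1$ in $\M[P]$, or equivalently $\BSig_2$ in $\M$ together with the boundedness, the set $J$ becomes a bounded $\Delta_1(P)$-definable cut. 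But such a cut is $M$-regular and hence has a maximum, which contradicts closure under successor. So $R$ is unbounded.

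Finally, $L$ is unbounded because $t(I_{s+1}) > r(s)$ for all $s+1 \in J$. The delicate step is the boundedness argument: I need to check that bounding the codes of the conditions turns the iteration into a $\Delta_1(P)$ recursion of bounded values, to which $\IDelta_1$ (via $\LDelta_1$) applies.
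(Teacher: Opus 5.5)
Your proof is correct and follows the paper's argument. If $R$ were bounded, then $J$, the $t(I_s)$ and the $a_s$ would all be bounded (which is exactly why $a_s$ and $t(I_s)$ are built into $r(s)$), so $J$ would be a proper $\Delta_1$ cut, contradicting $\IDelta_1$; the paper uses $\M[Y']$ where you use $\M[P]$, and either works. Unboundedness of $L$ then follows from $t(I_{s+1}) > r(s)$ via (1), and your check that the bounded-value recursion can be complemented using $\BSig_1$ makes explicit what the paper's one-line proof leaves implicit.
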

\begin{proof}
If $J = M$, then since $r(s) \geq s$, $R$ is $M$-unbounded.
If $J \neq M$, then if $R$ were $M$-bounded, $J$ would be a proper $\Delta_1(Y')$ cut, contradicting the fact that $\M[Y'] \models \IDelta_1$. 
Last, by (1), $t(I_{s+1}) \geq r(s)$, so $L$ is $M$-unbounded.
\end{proof}
% Moreover, the set $\{ r(s) : s \in J \}$ is $M$-unbounded, otherwise $J$ would be a proper $\Delta_1(Y')$ cut, contradicting the fact that $\M[Y'] \models \IDelta_1$. It follows by (1) that the set $\{ t(I_s) : s \in J \}$ is also $M$-unbounded.

Let $\TT$ be the downward-closure of $\bigcup_{s \in J} I_s$.

\begin{claim}
$\TT$ is an $M$-unbounded, $Y'$-computably bounded $\Delta_1(Y')$ tree.
\end{claim}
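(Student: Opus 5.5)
We need to show three things about $\TT$: it is a tree that is $\Delta_1(Y')$, it is $M$-unbounded, and it is bounded by a $Y'$-computable function. Throughout, we use that the sequence $s \mapsto (I_s, a_s)$ is a partial $\Delta_1(Y')$ function with domain the $\Sigma_1(Y')$ cut $J$, and we rely on the previous claim.

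\textbf{Definability.} A node $\sigma$ belongs to $\TT$ iff there are $s \in J$ and $v \in I_s$ with $\sigma \preceq v$. This is $\Sigma_1(Y')$ as written, so we need a bound on the search over $s$ to make it $\Delta_1(Y')$.

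Since $L = \{t(I_s) : s \in J\}$ is $M$-unbounded, we search $Y'$-computably for the least stage $s$ with $t(I_s) > |\sigma|$. For this stage, we decide whether some $v \in I_s$ extends $\sigma$.

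To see this is correct, recall that for $s < s'$ in $J$ we have $(I_{s'}, a_{s'}) \leq (I_s, a_s)$. So every $v' \in I_{s'}$ extends some $v \in I_s$. Hence every element of a later level whose length exceeds $|\sigma|$ has its $|\sigma|$-prefix already realized as a prefix of an element of $I_s$. Lengths are non-decreasing along the sequence, so stages $s'' < s$ have $t(I_{s''}) \leq |\sigma|$; their elements are themselves prefixes of elements of $I_s$, hence they add nothing new.

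Thus $\sigma \in \TT$ iff $\sigma$ is a prefix of some element of $I_s$ for the found $s$. This condition is $\Delta_1(Y')$ and total, because the search always terminates by unboundedness of $L$. Closure under prefixes holds by definition.

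\textbf{Unboundedness.} This follows immediately: $L$ is unbounded and every $I_s$ with $s \in J$ is nonempty. Nonemptiness holds because a $\PP$-condition is $f$-matching, so some stem exists since $\WSub^*(X_a)$ contains the empty word.

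\textbf{Computable bound.} Nodes of $\TT$ are words over the $M$-bounded alphabet $A \sqcup \{x_i : i < |\sigma|\}$ (by orderedness, variable indices are below the length). So each entry at position $x$ is coded below some primitive recursive $g(x)$, even independently of $Y'$.

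\textbf{Main obstacle.} The delicate point is the bounded search for $s$, which needs the previous claim together with the monotonicity of $(I_s, a_s)$ across the possibly proper cut $J$. That monotonicity was established via $\Pi^0_1$-induction in $\M[Q]$; beyond it, the argument is routine.
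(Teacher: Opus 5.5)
Your approach is the same as the paper's: search $Y'$-computably for a stage whose stems are long enough, then test $\sigma$ against the prefixes of those stems. However, your correctness argument rests on a false claim. You assert that elements of earlier levels $I_{s''}$ are ``themselves prefixes of elements of $I_s$''. The relation $(I_s,a_s) \leq (I_{s''},a_{s''})$ only goes one way. Every stem of $I_s$ extends some stem of $I_{s''}$, but a stem of $I_{s''}$ need not have any extension in $I_s$.

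Stems really are dropped in this construction. The homogeneity step (\Cref{lem:homogeneity-density}) uses \Cref{lem:deletion} to discard all branches forcing $\neg\varphi_r(G)$, and this can remove every extension of a given stem. Since $\TT$ is the downward closure of $\bigcup_{s\in J} I_s$, such dead-end stems still belong to $\TT$. Now take $\sigma \in I_{s''}$ with $|\sigma| = t(I_{s''})$ whose extensions were all deleted. With your strict inequality, the least $s$ with $t(I_s) > |\sigma|$ satisfies $s > s''$. Then $\sigma$ is a prefix of no element of $I_s$, so your test wrongly rejects it. What you compute is an infinite, prefix-closed subtree of $\TT$, not $\TT$ itself.

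The fix is the paper's choice of inequality: take the least $s$ with $t(I_s) \geq |\sigma|$. Minimality alone then gives $t(I_{s''}) < |\sigma|$ for every $s'' < s$, so no earlier stem can extend $\sigma$. For $s' \geq s$ you only need the direction that actually holds: every stem of $I_{s'}$ extends a stem of $I_s$ of length $\geq |\sigma|$. Alternatively, keep your inequality but also check whether $\sigma \in I_{s''}$ for the $M$-finitely many $s'' < s$ already computed during the search.

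Your other points are fine. The nonemptiness of each $I_s$ via $f$-matching is correct, and the paper leaves it implicit. The primitive recursive bound from orderedness also works in place of the paper's appeal to length-homogeneity.
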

\begin{proof}
$\TT$ is $M$-unbounded since $L$ is $M$-unbounded.
Moreover, by length-homogeneity of the set of stems of a $\PP$-precondition and the definition of the $\PP$-precondition extension relation, $\TT$ is $Y'$-computably bounded and $\Delta_1(Y')$: to know whether some node~$w$ belongs to~$\TT$, search $Y'$-computably for the least stage~$s$ such that $t(I_s) \geq |w|$, and check whether $w$ is an initial segment of any element of~$I_s$.
\end{proof}

Since $\M[P] \models \RCA_0^*$ and $P \gg Y'$, by \Cref{lem:wkl-to-bounded}, $P$ computes an $M$-infinite path~$G \in [\TT]$. Note that for every~$s$, $G \in [I_s, a_s]$. By (1), there are $M$-unboundedly many levels~$n$ such that $G[A^n] = \{ G[v] : v \in A^n \}$ is $f$-monochromatic. 

\begin{claim}
$(G \oplus Y)' \leq_T P$ and $\M[G] \models \RCA_0 + \BSig_2$.
\end{claim}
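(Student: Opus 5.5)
To prove $(G \oplus Y)' \leq_T P$, I will decide membership of $e$ in $(G \oplus Y)'$ by reading off the forcing information stored along the $Y'$-computable sequence of conditions, together with $G$ itself. Both $Y'$ and $G$ are computable from $P$.

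Fix $e \in M$. Since the set $R = \{ r(s) : s \in J \}$ is $M$-unbounded by the first claim, there is a stage $s \in J$ with $e < r(s)$. Such a stage can be found by a $\Delta_1(Y')$ search: enumerate stages $s$ and compute $r(s)$ until $r(s) > e$. Unboundedness of $R$ guarantees the search succeeds inside $M$. Since $J$ is only a $\Sigma_1(Y')$ cut, I will phrase this as a $\Sigma_1(Y')$ search that halts, and the halting stage automatically lies in $J$.

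At stage $s+1$, let $v = G \uh_{t(I_{s+1})}$. Because $G \in [\TT]$ and $\TT$ is the downward closure of the length-homogeneous sets $I_s$, we have $v \in I_{s+1}$; this is computable from $G$ together with $Y'$, which yields $t(I_{s+1})$ and $I_{s+1}$. By property (2) of the construction, $(v, a_{s+1})$ decides $\Phi_e^{G\oplus Y}(e)$. By \Cref{lem:jump-density}, this decision is simply whether $\Phi_e^{v \oplus Y}(e)\downarrow$, which is a bounded question about the finite word $v$ and is therefore $\Delta_1(Y)$. Since $G \in [I_{s+1}, a_{s+1}] \subseteq [v, a_{s+1}]$, forcing implies truth for both $\Sigma^0_1$ and $\Pi^0_1$ forcing (as noted after \Cref{caOVW-def-forcing}, the latter applied to finite initial segments of $G$, all of which lie in $[v,a_{s+1}]$). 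Hence $e \in (G\oplus Y)'$ iff $\Phi_e^{v\oplus Y}(e)\downarrow$.

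This gives a $\Delta_1(P)$ definition of $(G\oplus Y)'$, using $Y' \leq_T P$ and $G \leq_T P$, so $(G\oplus Y)' \leq_T P$. Finally, $\M[G]$ is topped by $G \oplus Y$, since $Y \in S$ tops $\M$. We have $(G\oplus Y)' \leq_T P$ and $\M[P] \models \RCA_0^*$, so \Cref{lem:jump-bsig2-bsig1} yields $\M[G] \models \RCA_0 + \BSig_2$.

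I expect the main obstacle to be the formal bookkeeping in the weak setting. One must check that searching for $s$ with $r(s) > e$ is legitimate even when $J$ is a proper cut, and that the $\Pi^0_1$ forcing relation transfers to the infinite path $G$ without any induction beyond $\IDelta_1$ in $\M[P]$. Both points rest on the normal-form monotonicity of the $\Sigma^0_1$-formulas and on the fact that every finite prefix of $G$ extending $v$ belongs to $[v, a_{s+1}]$, which follows from $G \in [I_t,a_t]$ for all $t$.
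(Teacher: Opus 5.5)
Your proposal is correct and follows essentially the paper's own argument. You search $P$-computably for $s \in J$ with $r(s) > e$, take the stem $v \in I_{s+1}$ with $v \prec G$, and use property (2) to get $\Phi_e^{G\oplus Y}(e)\downarrow$ iff $\Phi_e^{v\oplus Y}(e)\downarrow$; you then conclude with \Cref{lem:jump-bsig2-bsig1}. One small correction: $\Phi_e^{v\oplus Y}(e)\downarrow$ is a $\Sigma_1(Y)$ question, not a bounded $\Delta_1(Y)$ one, so it must be decided using $Y' \leq_T P$; this does not affect your conclusion.
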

\begin{proof}
Given~$e \in M$, to decide whether $\Phi^{G \oplus Y}_e(e)\downarrow$, $P$-computably search for some~$s \in J$ such that $r(s) > e$. Let $v \in I_{s+1}$ be such that $v \prec G$. By (2), $\Phi^{G \oplus Y}_e(e)\downarrow$ iff $\Phi^{v \oplus Y}_e(e)\downarrow$.
The second part of the claim follows from \Cref{lem:jump-bsig2-bsig1}.
\end{proof}

By the infinite pigeonhole principle ($\RT^1$) which holds over~$\RCA_0 + \BSig_2$ (see Hirst~\cite[Theorem 6.4]{hirst1987combinatorics}), there is some color~$\ell^* < \ell$ and some $M$-unbounded set~$H = \{ h_0 < h_1 < \dots \} \subseteq M$ in~$\M[G]$ such that $G[A^n]$ is $f$-monochromatic for color~$\ell^*$ for every~$n \in H$.
Let $\h G = G[V]$ where $V(i) \in A$ if $i \not \in H$ and $V(i) = x_t$ if $i = h_t$. Then $\WSub^*(\h G)$ is $f$-monochromatic for color~$\ell^*$. Moreover, $(\h G \oplus Y)' \leq_T (G \oplus Y)' \leq_T P$. This completes the proof of \Cref{prop:ovw-step}.

\section{Open questions}

The Ordered Variable Word theorem can be weakened to allow the variable kind to be interleaved, and occur infinitely often. This is known as the \emph{Carlson-Simpson Lemma} $\CSL$. Its zero-dimensional version ($\CSL^0$) is an immediate consequence of $\OVW^0$, and is therefore $\Pi^1_1$-conservative over~$\RCA_0 + \BSig_2$.

The one-dimensional version for $\ell$ colors, $\CSL^1_\ell$, is a generalization of the tree theorem for pairs and $\ell$ colors ($\mathsf{TT}^2_\ell$), which itself generalizes Ramsey's theorem for pairs and $\ell$ colors ($\RT^2_\ell$). Hwang~\cite{hwang2026conservation} proved that $\RCA_0 + \RT^2_2$ is a $\Pi^1_1$-conservative extension of~$\RCA_0 + \BSig_2$, and Le Houérou and Patey~\cite{houerou2026csl} proved that $\RCA_0 + \CSL^1_2$ is a $\forall \Pi^0_4$-conservative extension of~$\RCA_0 + \BSig_2$. 

\begin{question}
Is $\RCA_0 + \CSL^1_2$ $\Pi^1_1$-conservative over~$\RCA_0 + \BSig_2$?
\end{question}

\begin{question}
Is $\RCA_0 + \mathsf{TT}^2_2$ $\Pi^1_1$-conservative over~$\RCA_0 + \BSig_2$?
\end{question}

\subsection*{AI disclaimer}

The original idea of formalizing the weakly low basis theorem for $\OVW^0$ from Liu and Patey~\cite{liu2026reverse} using the techniques of Le Houérou, Patey and Yokoyama~\cite{houerou2025wf} came from the author. Claude Opus 5.5 (Anthropic) was used to prove the $\Pi^0_1$-extension lemma (\Cref{lem:pi01-extension}) which is a core lemma in the proof. The proof was completely rewritten by the author for clarity. 

% \begin{center}
% \textbf{Acknowledgement}
% \end{center}
% The authors are thankful to ?? for insightful comments and discussions.
 
\bibliographystyle{plain}
\bibliography{biblio.bib}

\end{document}